\newtheorem{thm}{Theorem}[section]
\newtheorem{cor}[thm]{Corollary}
\newtheorem{prop}[thm]{Proposition}
\newtheorem{lem}[thm]{Lemma}
\newtheorem{conj}[thm]{Conjecture}
\theoremstyle{definition}
\newtheorem{defn}[thm]{Definition}
\theoremstyle{remark}
\theoremstyle{definition}
\theoremstyle{definition}
\theoremstyle{definition}
\numberwithin{equation}{section}
\title{Horizon saddle connections and Morse-Smale dynamics of dilation surfaces}
\author{Guillaume Tahar}
\address[Guillaume Tahar]{Beijing Institute of Mathematical Sciences and Applications, Huairou District, Beijing, China}
\email{guillaume.tahar@bimsa.cn}
\date{February 9, 2023}
\keywords{Dilation surfaces, Horizon saddle connections, Hyperbolic closed geodesics}
\begin{document}

\begin{abstract}
Dilation surfaces are generalizations of translation surfaces where the transition maps of the atlas are translations and homotheties with a positive ratio. In contrast with translation surfaces, the directional flow on dilation surfaces may contain trajectories accumulating on a limit cycle. Such a limit cycle is called hyperbolic because it induces a nontrivial homothety. It has been conjectured that a dilation surface with no actual hyperbolic closed geodesic is in fact a translation surface. Assuming that a dilation surface contains a horizon saddle connection, we prove that the directions of its hyperbolic closed geodesics form a dense subset of $\mathbb{S}^{1}$. We also prove that a dilation surface satisfies the latter property if and only if its directional flow is Morse-Smale in an open dense subset of $\mathbb{S}^{1}$.\newline
\end{abstract}
\maketitle
\setcounter{tocdepth}{1}
\tableofcontents

\section{Introduction}

Let us consider a compact Riemann surface $X$ with a finite set $\Sigma$ of marked points. A \textit{dilation structure} is an atlas of charts on $X \setminus \Sigma$ modelled on the complex plane $\mathbb{C}$ with transition maps of the form $z \mapsto az+b$ with $a \in \mathbb{R}_{+}^{\ast}$ and $b \in \mathbb{C}$.\newline
In a dilation surface, one can define what a straight line is, as well as a slope. Therefore one can define a directional flow on the surface (leaves or trajectories are locally conjugated to straight lines in each chart). Just like translation surfaces can be understood as suspensions of interval exchange maps, dilation surfaces can be thought of as suspensions of affine interval exchange maps. However, their holonomy does not preserve any metric.\newline
We say that a dilation surface is \textit{strict} if it is not a translation surface. Moreover, a closed geodesic in a dilation surface is said to be \textit{hyperbolic} if its monodromy representation has a nontrivial dilation ratio (the coefficient of the affine transition map). We state three conjectures of increasing strengths.

\begin{conj}\label{conj:one}
Every strict dilation surface contains a hyperbolic closed geodesic.
\end{conj}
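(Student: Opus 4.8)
The plan is to translate the dilation structure into holonomy data and then to promote a purely cohomological nontriviality into a dynamical one. The linear parts of the transition maps $z\mapsto az+b$ assemble into a homomorphism $\pi_{1}(X\setminus\Sigma)\to\mathbb{R}_{+}^{\ast}$; since the target is abelian this factors through $H_{1}(X\setminus\Sigma;\mathbb{Z})$ and, after composing with $\log$, is recorded by a single class $\kappa\in H^{1}(X\setminus\Sigma;\mathbb{R})$. By definition the surface is strict exactly when $\kappa\neq 0$, while a closed geodesic $\gamma$ is hyperbolic exactly when $\langle\kappa,[\gamma]\rangle\neq 0$. The conjecture thus reads: if $\kappa\neq 0$, then some periodic orbit of some directional flow carries a homology class on which $\kappa$ does not vanish. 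The essential difficulty already visible here is that $\kappa\neq 0$ only produces a \emph{loop} with nontrivial dilation, whereas a closed geodesic must be a straight trajectory of a single directional flow, and a free homotopy class need not contain such a representative.

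First I would fix a direction $\theta$ and pass to the first-return map on a transversal, which is an affine interval exchange transformation since the surface is a suspension of such maps. I would invoke the standard dichotomy for these maps — the presence of a periodic orbit versus recurrent behaviour with no periodic orbit — and observe that any isolated attracting or repelling periodic orbit is automatically a hyperbolic closed geodesic, because attraction or repulsion forces the return multiplier to differ from $1$. Consequently it suffices to exhibit a single direction whose flow contains a nontrivial isolated periodic orbit. At this point I would try to reduce to the situation handled by the main theorem: once a horizon saddle connection is present, the hyperbolic directions are dense, hence in particular nonempty, and the conjecture follows. The whole problem then collapses to the structural statement that every strict dilation surface contains a horizon saddle connection.

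That structural statement is the step I expect to be the genuine obstacle, and the reason the assertion is posed as a conjecture. The natural attack is contrapositive: assume no hyperbolic closed geodesic exists, so every periodic trajectory is an isometry and each direction is either recurrent without periodic orbits or decomposes into non-expanding cylinders. I would then attempt to integrate the flat $\mathbb{R}_{+}^{\ast}$-connection recorded by $\kappa$ along these foliations, using the absence of dilation along closed orbits to build a global positive function rescaling every chart so that all transition maps become pure translations; this would force $\kappa=0$ and contradict strictness. The danger — and the gap that keeps the problem open — is the conceivable existence of strict dilation surfaces, with $\kappa\neq 0$, all of whose directional flows are minimal or contain only flat ($a=1$) cylinders, so that a hyperbolic closed geodesic, and hence a horizon saddle connection, never appears; excluding such exotic behaviour, or forcing a horizon saddle connection to emerge from the cohomological nontriviality of $\kappa$, is the heart of the matter.
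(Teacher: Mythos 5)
The statement you were asked to prove is Conjecture~\ref{conj:one}, which the paper does \emph{not} prove --- it is an open conjecture attributed to \cite{DFG}, and the paper's actual results (Theorems~\ref{thm:equi} and~\ref{thm:horizon}) only establish it, in a strengthened form, for the special class of dilation surfaces admitting a horizon saddle connection. Your write-up is candid that it is a strategy sketch rather than a proof, and your cohomological reformulation (strictness $\Leftrightarrow$ $\kappa\neq 0$, hyperbolicity of $\gamma$ $\Leftrightarrow$ $\langle\kappa,[\gamma]\rangle\neq 0$) and your identification of the core difficulty --- that $\kappa\neq 0$ produces a topological loop with nontrivial dilation but not a geodesic representative --- are both correct and match how the problem is understood.

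However, the specific reduction you propose is not merely ``the genuine obstacle''; it is a dead end. You suggest collapsing the problem to the structural claim that every strict dilation surface contains a horizon saddle connection. The paper itself signals why this cannot work: the presence of a horizon saddle connection forces an almost trivial $GL^{+}(2,\mathbb{R})$-action and heavily constrained Veech groups (Theorems 1.3--1.5 of \cite{Ta}), and surfaces with horizon saddle connections form exotic subclasses (e.g.\ one connected component of $D_{2,1}$), whereas there exist strict dilation surfaces with large Veech groups to which these constraints do not apply. So the class covered by Theorem~\ref{thm:horizon} is strictly smaller than the class of all strict dilation surfaces, and Conjecture~\ref{conj:one} cannot be obtained by proving ubiquity of horizon saddle connections. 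Two further soft spots: the ``standard dichotomy'' you invoke for affine interval exchange maps is much weaker than for isometric IETs (affine IETs can have wandering intervals and Cantor-like quasi-minimal sets, so ``no periodic orbit'' does not imply minimality or isometry); and in your contrapositive step, the absence of hyperbolic closed geodesics only tells you that $\kappa$ vanishes on homology classes represented by closed geodesics, which need not generate $H_{1}(X\setminus\Sigma;\mathbb{Z})$, so you cannot conclude $\kappa=0$ and trivialize the connection. The honest status of the problem is exactly what the paper states: open in general, known for surfaces with horizon saddle connections.
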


Conjecture \ref{conj:one} has been proposed in \cite{DFG}.

\begin{conj}\label{conj:two}
Let $X$ be a strict dilation surface. The set of directions of hyperbolic closed geodesics of $X$ forms a dense subset of $\mathbb{S}^{1}$.
\end{conj}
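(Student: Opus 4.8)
The plan is to deduce Conjecture \ref{conj:two} from the two unconditional results already announced, by \emph{removing} the hypothesis on horizon saddle connections rather than assuming it. Recall that the main theorem asserts density of the hyperbolic directions as soon as the surface carries \emph{one} horizon saddle connection, and that, separately, density of the hyperbolic directions is equivalent to the directional flow being Morse--Smale on an open dense subset of $\mathbb{S}^{1}$. Consequently it suffices to prove the purely existential statement: every strict dilation surface $X$ carries at least one horizon saddle connection. A single such connection already feeds the main theorem and yields the full density, so the whole problem collapses from a density statement to an existence statement, after which the dynamical machinery of the paper does the rest.

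To produce a horizon saddle connection I would start from strictness itself. Since $X$ is not a translation surface, its holonomy $\rho\colon\pi_{1}(X\setminus\Sigma)\to\mathbb{R}_{+}^{\ast}\ltimes\mathbb{C}$ contains an element of dilation ratio $\lambda\neq 1$; fix a loop $c$ with $\rho(c)$ of ratio $\lambda>1$. Because every transition map $z\mapsto az+b$ has $a\in\mathbb{R}_{+}^{\ast}$, directions are globally well defined and each $\mathcal{F}_{\theta}$ is an honest oriented foliation on $X\setminus\Sigma$. The key geometric input is that the contraction supplied by $\rho(c)$ forces a relatively compact, forward-invariant annular region: developing a neighbourhood of $c$ and iterating $\rho(c)$ produces nested annuli whose moduli shrink by a factor governed by $\lambda$. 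Inside such a region I would run a Poincar\'e--Bendixson type analysis in a direction $\theta_{c}$ adapted to $c$: a trajectory that enters cannot escape, and its return map strictly contracts transverse length, so it cannot be recurrent in the translation sense. A maximal such trajectory must therefore accumulate on a limit cycle or run into a singularity; in the latter case a saddle connection appears, and the nontrivial contraction $\lambda\neq 1$ on the associated return loop is exactly what makes it a horizon saddle connection.

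The step I expect to be the genuine obstacle is guaranteeing that this alternative terminates at a horizon saddle connection rather than at a limit cycle with no accompanying saddle connection, and doing so on an \emph{arbitrary} strict surface where no invariant cylinder is given in advance. Concretely, one must choose $\theta_{c}$ and the transversal so that a separatrix issuing from a point of $\Sigma$ returns, after crossing $c$, into the contracting region and is captured by a singularity on the scaled boundary. Here the finiteness of $\Sigma$ has to be combined with the contraction: the images of the finitely many outgoing separatrices under iteration of the return map form a family whose directions cannot all avoid $\Sigma$, forcing a separatrix-to-singularity connection in a direction accumulating on $\theta_{c}$. Making this capture argument rigorous amounts, in effect, to proving Conjecture \ref{conj:one} in the presence of a hyperbolic holonomy element, and it is the crux on which the entire reduction rests.

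Should this existence route resist, I would fall back on the Morse--Smale characterisation and attack Conjecture \ref{conj:two} directly. The directions carrying a saddle connection are countable, hence their complement is dense, and on that complement I would argue that strictness precludes minimality in a dense set of directions through an intermediate-value argument on return maps: in an interval of directions with fixed combinatorics the first-return map is an affine interval exchange whose slopes are locally constant (they come from $\rho$) while its translation parts vary continuously, so a branch of slope $a\neq 1$ carries a fixed point $x^{\ast}(\theta)=b(\theta)/(1-a)$ that sweeps an interval as $\theta$ moves; whenever $x^{\ast}(\theta)$ enters its branch one obtains a hyperbolic periodic orbit, that is, a hyperbolic closed geodesic, in that direction. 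Showing that these sweeps cover a dense set of directions, and that a branch of slope $\neq 1$ is present for a dense set of combinatorial regimes, would yield density directly; the obstruction is again to control where truly affine branches appear as $\theta$ ranges over all of $\mathbb{S}^{1}$.
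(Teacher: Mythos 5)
First, a point of framing: the statement you were asked to prove is stated in the paper as Conjecture~\ref{conj:two} and is \emph{left open} there. The paper proves it only under the additional hypothesis that $X$ contains a horizon saddle connection (Corollary~\ref{cor:horconj2}, which together with Theorem~\ref{thm:equi} yields Theorem~\ref{thm:horizon}). So there is no ``paper proof'' of the general statement to match, and your proposal must stand or fall as new mathematics.

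It falls, and the failure point is your central reduction: the claim that it suffices to show every strict dilation surface carries at least one horizon saddle connection is not merely unproven but false. Horizon saddle connections are a genuinely restrictive feature: as the introduction recalls, Theorems 1.3 to 1.5 of \cite{Ta} show that their presence forces strong constraints on the Veech group and an almost trivial action of $GL^{+}(2,\mathbb{R})$. Strict dilation surfaces with large (non-elementary) Veech groups exist --- for instance among the examples of \cite{DFG} --- and such surfaces can carry no horizon saddle connection at all; yet they are strict, so Conjecture~\ref{conj:two} must be attacked for them by entirely different means. Your Poincar\'e--Bendixson sketch also conflates two distinct objects: a hyperbolic closed geodesic (limit cycle) and a horizon saddle connection. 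A hyperbolic cylinder of angle less than $\pi$ does not in general produce any horizon saddle connection --- the paper lists only cylinders of angle at least $\pi$ among the classes possessing one --- and a saddle connection bounding a hyperbolic cylinder can still be crossed an unbounded number of times by trajectories elsewhere in the surface, so the contraction $\lambda\neq 1$ along a nearby loop has no bearing on the global intersection bound that defines the horizon property. You concede that making the ``capture'' argument rigorous amounts to proving Conjecture~\ref{conj:one}; in fact it would require strictly more, since Conjecture~\ref{conj:one} asks only for a hyperbolic closed geodesic, not for a horizon saddle connection. Your fallback via affine interval exchange return maps (sweeping the fixed point $x^{\ast}(\theta)=b(\theta)/(1-a)$ across a branch of slope $a\neq 1$) is the standard heuristic behind Conjectures~\ref{conj:one} and~\ref{conj:two}, but, as you note yourself, controlling where genuinely affine branches appear as $\theta$ ranges over $\mathbb{S}^{1}$ is exactly the open problem, and no new mechanism is supplied to resolve it. In short: the proposal does not prove the statement, and its main reduction is irreparable as stated.
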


A weak version of Conjecture~\ref{conj:two} has been proved in \cite{BGT}. It asserts that directions of closed geodesics are dense in $\mathbb{S}^{1}$. Unfortunately no information on whether a significant part of these geodesics are hyperbolic is provided.

\begin{conj}\label{conj:three}
Let $X$ be a strict dilation surface. There is an open dense subset $U$ of $\mathbb{S}^{1}$ such that the directional flow of $X$ along any direction of $U$ is Morse-Smale.
\end{conj}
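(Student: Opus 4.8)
The plan is to establish Conjecture~\ref{conj:three} for every strict dilation surface $X$ by reducing it, through an unconditional structural equivalence, to Conjecture~\ref{conj:two}. Concretely, I would first prove that for an arbitrary dilation surface the set of directions whose directional flow $\phi_\theta$ is Morse--Smale is open and dense in $\mathbb{S}^{1}$ \emph{if and only if} the set of directions carrying a hyperbolic closed geodesic is dense in $\mathbb{S}^{1}$. Granting this equivalence, Conjecture~\ref{conj:three} is literally Conjecture~\ref{conj:two}, and the entire burden shifts onto establishing the latter density statement for all strict dilation surfaces.

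For the easy half of the equivalence, one argues from Morse--Smale to density. If $\phi_\theta$ is Morse--Smale then its nonwandering set consists of the cone points of $X$, which are saddle-type singularities with finitely many separatrices, together with finitely many hyperbolic periodic orbits. A generic trajectory cannot have a cone point in its $\omega$-limit set (only the finitely many separatrices do), so it must accumulate on a periodic orbit; hence at least one closed geodesic $\gamma$ is present. Its Poincar\'e return map to a transversal has multiplier equal to the dilation ratio $\lambda(\gamma)\ne 1$, so $\gamma$ is hyperbolic. Since the Morse--Smale directions are assumed to fill an open dense set, the directions carrying a hyperbolic closed geodesic are dense.

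The substantive half runs from density to the Morse--Smale property. The key point is \emph{persistence}: a hyperbolic closed geodesic has return map a contraction or expansion with multiplier $\lambda\ne1$, so by the implicit function theorem a nearby hyperbolic periodic orbit survives for all slopes in an open interval. Thus the set $V$ of directions carrying some hyperbolic closed geodesic is open, and dense by hypothesis. Inside $V$ I would then show that the failures of Morse--Smale form a nowhere dense set: the obstructions are saddle connections between cone points (countably many, each realised in a single direction), non-hyperbolic periodic orbits, and accumulation of periodic orbits. Parametrising by return maps to a fixed transversal, hyperbolicity is an open and prevalent condition along one-parameter families, so the bad set is meager and Morse--Smale holds on an open dense subset of $V$, hence of $\mathbb{S}^{1}$; openness is just the structural stability of Morse--Smale flows.

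It remains to prove density of hyperbolic directions for \emph{every} strict dilation surface, and this is where I expect the real obstacle to lie. Strictness means the affine holonomy is nontrivial somewhere, and the goal is to convert this single dilating loop into hyperbolic closed geodesics realised in a dense set of slopes; the natural mechanism is to track a trajectory that, because of the nontrivial ratio, spirals onto an attracting or repelling limit cycle in many directions. The difficulty is that controlling this spiralling uniformly across a dense set of directions seems to require a concrete global feature of the surface---precisely the kind of structure a horizon saddle connection supplies---rather than mere nontriviality of the holonomy. Absent a proof of Conjecture~\ref{conj:two} in full generality, the reduction above yields Conjecture~\ref{conj:three} only for those strict dilation surfaces for which density of hyperbolic directions can be secured, and closing this gap for all strict dilation surfaces is the main obstruction.
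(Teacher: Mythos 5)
Your overall architecture --- reduce Conjecture~\ref{conj:three} to Conjecture~\ref{conj:two} via an equivalence, then concede that density of hyperbolic directions for \emph{all} strict dilation surfaces remains open --- is exactly the paper's (Theorem~\ref{thm:equi} plus Theorem~\ref{thm:horizon}; note the statement you were given is a conjecture the paper itself only proves under the horizon saddle connection hypothesis), and your ``easy half'' is essentially the implicit converse the paper does not bother writing out. The genuine gap is in your substantive half, from density to Morse--Smale. You argue that inside the open set $V$ of directions carrying a hyperbolic closed geodesic the failures of Morse--Smale are meager, because ``hyperbolicity is an open and prevalent condition along one-parameter families,'' and you quote structural stability for openness. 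Neither tool applies. The directional flows of a dilation surface form a very specific, non-generic one-parameter family of flows that are singular (non-smooth) at the cone points, so no genericity or prevalence theorem is available; and the existence of one hyperbolic cylinder in direction $\theta$ says nothing a priori about the complement of that cylinder, where the flow could be minimal in a subsurface, could lie in flat (non-hyperbolic) cylinders, or could have exotic quasi-minimal sets. Ruling out these behaviors for a dense set of directions is the actual content of the theorem, not a transversality afterthought. Likewise, Morse--Smale structural stability concerns $C^{1}$-perturbations of smooth flows on closed manifolds; rotating the direction of a dilation flow is not such a perturbation near the singularities, so openness cannot be quoted from that theory.

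The paper fills this gap with a geometric mechanism absent from your sketch: it removes a portion $C'$ of a hyperbolic cylinder from $X$ to obtain a dilation surface with boundary, and proves (Proposition~\ref{prop:noEP}) that such a surface admits no \emph{exceptional pencil}, i.e.\ no open family of trajectories that neither crosses the boundary nor accumulates on a hyperbolic closed geodesic. That proposition is established by a descent on the minimal-triangulation class $(\Delta_{min})$, using Veech's criterion (Theorem~\ref{thm:Veech}) and the embedded-triangle Lemma~\ref{lem:triangle}. Applying it through Corollary~\ref{cor:hyperExist} to the finitely many pencils of separatrices of $X \setminus C'$ yields an open dense set of directions in which every separatrix falls into a hyperbolic cylinder, and Proposition~\ref{prop:separatrix} then upgrades control of separatrices to control of \emph{all} trajectories (Proposition~\ref{prop:MS}). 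So your reduction statement is true, but your proof of it does not go through without something like the no-exceptional-pencil theorem; and, as you correctly note, even granting the reduction the conjecture stays open in general --- the paper closes it only for surfaces with a horizon saddle connection, where Lemma~\ref{lem:horipencil} manufactures the needed boundary-avoiding pencils in every open set of directions.
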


The Morse-Smale dynamics means that every trajectory either hits a singularity or accumulates on a hyperbolic closed geodesic. These kind of dynamics never happen in translation surfaces where generic trajectories are either periodic or minimal in some domain (see for example Proposition 5.5 in \cite{Ta1} for a recent reference).\newline

An even stronger (and more interesting) form of Conjecture~\ref{conj:three} is that the open dense set has full measure.\newline

The first main result of this paper is that Conjectures~\ref{conj:two}~and~\ref{conj:three} are in fact equivalent to each other.

\begin{thm}\label{thm:equi}
A dilation surface satisfying Conjecture \ref{conj:two} also satisfies Conjecture \ref{conj:three}.
\end{thm}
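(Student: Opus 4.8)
The plan is to take the open dense set $U$ of Conjecture~\ref{conj:three} to be the set of Morse--Smale directions itself, so that it remains only to prove this set is both open and dense. Openness is a structural-stability statement. A Morse--Smale directional flow has only finitely many hyperbolic closed geodesics, each of which is a hyperbolic fixed point of the first-return map to a transversal and therefore persists under a $C^{1}$-small perturbation; the asymptotics of the finitely many separatrices also vary continuously. Hence any direction sufficiently close to a Morse--Smale direction is again Morse--Smale. The delicate point is that rotating the direction is only a $C^{1}$-small perturbation of the flow away from the cone points, so this argument must be carried out on the complement of fixed standard neighbourhoods of $\Sigma$; I regard making this perturbation statement precise as the main technical obstacle.

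For density, first note that the set $\mathcal{H}$ of directions carrying a hyperbolic closed geodesic is open, by the same persistence of hyperbolic fixed points of the return map. Conjecture~\ref{conj:two} then shows $\mathcal{H}$ is dense, hence open and dense, so its complement is closed and nowhere dense. Next I characterise the failure of the Morse--Smale property: a direction $\theta$ fails to be Morse--Smale precisely when $\phi_{\theta}$ has a non-hyperbolic closed geodesic or a non-trivial minimal component, because in the absence of both, the classical structure theory of flows on surfaces forces the $\alpha$- and $\omega$-limit set of every non-singular trajectory to be a hyperbolic closed geodesic.

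I then show that both obstructions are confined to a countable set of directions as soon as a hyperbolic closed geodesic is present. Let $P$ be the set of directions carrying a non-hyperbolic closed geodesic and $SC$ the set of directions of saddle connections; both are countable. A non-hyperbolic closed geodesic puts $\theta$ in $P$ by definition. A proper minimal component $M$ has a boundary consisting of separatrices and closed orbits; a hyperbolic closed geodesic cannot lie on $\partial M$, since it attracts or repels a full two-sided neighbourhood and would force any orbit of $M$ accumulating on it to converge to it, contradicting minimality --- this is the one step where hyperbolicity is genuinely used. Thus $\partial M$ is made of saddle connections and non-hyperbolic closed orbits, so $\theta\in SC\cup P$. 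The only remaining obstruction, a minimal component filling the entire surface, requires the absence of any closed geodesic and so can occur only for $\theta\notin\mathcal{H}$.

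Suppose now the non-Morse--Smale set $B$ had nonempty interior, say an arc $I\subseteq B$. As $I\setminus\mathcal{H}$ is nowhere dense, $I\cap\mathcal{H}$ contains a sub-arc $I'$. Each $\theta\in I'$ carries a hyperbolic closed geodesic, so it cannot be whole-surface minimal, and its obstruction is therefore a non-hyperbolic closed geodesic or a proper minimal component; by the previous paragraph $\theta\in SC\cup P$. This exhibits the uncountable arc $I'$ inside the countable set $SC\cup P$, a contradiction. Hence $B$ has empty interior, the Morse--Smale directions are dense, and together with their openness they form the open dense set $U$ demanded by Conjecture~\ref{conj:three}. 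The main obstacle is the structural-stability argument underlying openness, supported by the Maier-type description of the boundary of a minimal component.
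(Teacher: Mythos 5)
There is a genuine gap, and it sits exactly where the paper has to work hardest. Your density argument rests on the claim that if a direction $\theta$ carries no non-hyperbolic closed geodesic and no ``non-trivial minimal component'', then every non-singular trajectory limits on a hyperbolic closed geodesic or a singularity, by ``classical structure theory of flows on surfaces''. That structure theory (Schwartz's theorem excluding exceptional minimal sets, or the Maier--Levitt description of minimal components as subsurfaces bounded by saddle connections and closed orbits) requires either $C^{2}$ regularity or an invariant transverse measure, and the directional flow of a dilation surface has neither: its first-return maps are affine interval exchange maps, which are known to admit wandering intervals and Cantor (Denjoy-type) minimal sets. In such a direction the $\omega$-limit set of a trajectory can be an exceptional lamination --- not a closed geodesic, not a polycycle, and not a subsurface whose boundary consists of separatrices and non-hyperbolic closed orbits --- and such a direction need not lie in $SC\cup P$. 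Your dichotomy therefore does not cover these directions, and nothing in the proposal shows that they form a meagre (let alone countable) subset of $\mathcal{H}$; a priori a hyperbolic cylinder in direction $\theta$ can coexist with a Denjoy-like attractor elsewhere on the surface in the same direction. (A smaller instance of the same omission: a trajectory can accumulate on a cycle of saddle connections; this happens to be absorbed by $SC$, but it already shows that the stated ``precisely when'' is not the right trichotomy.)

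Ruling out exactly this exceptional recurrence is the content of the paper's Sections~\ref{sec:EPD} and~\ref{sec:MSD}: one cuts out a portion of a hyperbolic cylinder, considers the finitely many separatrices of the resulting surface with boundary, and uses the non-existence of exceptional pencils (Proposition~\ref{prop:noEP}, proved by the $(\Delta_{min})$ induction on triangulations) to show that for an open dense --- not co-countable --- set of directions every separatrix falls into a hyperbolic cylinder; Proposition~\ref{prop:separatrix} then upgrades this from separatrices to all trajectories. Your openness step is closer in spirit to what actually works (finitely many separatrices entering hyperbolic cylinders transversally in finite time is an open condition), but the density step cannot be reduced to a countability argument without first proving something equivalent to Proposition~\ref{prop:noEP}.
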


The second main theorem of the paper is that Conjecture~\ref{conj:three} holds for a wide class of dilation surfaces. The notion of \textit{horizon saddle connection} has been introduced in \cite{Ta}. They are saddle connections such that the number of intersection points with any trajectory is globally bounded (see Subsection~\ref{sec:Horizon} for details). There is no such saddle connection in translation surfaces.\newline
Moreover, supposing such a saddle connection can be found in a dilation surface implies strong constraints on its Veech group and an almost trivial action of $GL^{+}(2,\mathbb{R})$ (see Theorems 1.3 to 1.5 in \cite{Ta}).\newline

Dilation surfaces with horizon saddle connections include several classes already studied:
\begin{itemize}
\item Dilation surfaces with hyperbolic cylinders of angle at least $\pi$;
\item A subclass of quasi-Hopf surfaces (see Proposition 3.7 in \cite{Ta});
\item Two-chamber surfaces (class described in Subsection 3.1 of \cite{DFG} as forming an exotic connected component of the moduli space $D_{2,1}$ of dilation surfaces of genus two with one singularity, see Figure 1 for an example).\newline
\end{itemize}

\begin{figure}
\includegraphics[scale=0.7]{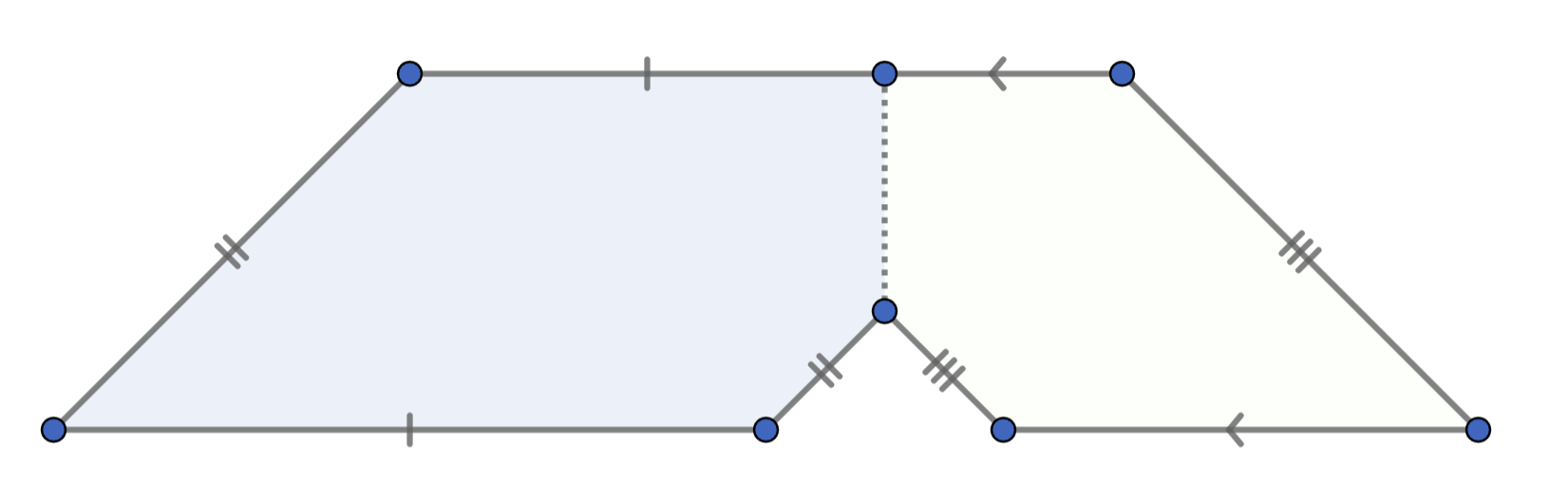}
\caption{A two-chamber surface. The dashed saddle connection between the two chambers is a horizon saddle connection because its complement in the surface is not connected. No trajectory can intersect it twice.} 
\end{figure}

\begin{thm}\label{thm:horizon}
Dilation surfaces containing at least one horizon saddle connection satisfy Conjecture~\ref{conj:three}.
\end{thm}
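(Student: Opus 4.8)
The plan is to reduce everything to Theorem~\ref{thm:equi}. That result shows that a dilation surface satisfying Conjecture~\ref{conj:two} also satisfies Conjecture~\ref{conj:three}, so it suffices to prove that a dilation surface $X$ carrying a horizon saddle connection $\gamma$ has hyperbolic closed geodesics in a dense set of directions; in other words, the entire content is to verify Conjecture~\ref{conj:two} for $X$. All the work will go into manufacturing these hyperbolic closed geodesics out of the combinatorics of $\gamma$.

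First I would convert the defining property of $\gamma$ into a trapping region together with a homothety. Since a translation surface has no horizon saddle connection (its recurrent trajectories cross every saddle connection infinitely often), the existence of $\gamma$ forces nontrivial dilation holonomy nearby: there is a loop linking $\gamma$ whose holonomy is a homothety $h\colon z\mapsto\lambda z$ with $\lambda\neq 1$, and I would invoke the rigidity of horizon saddle connections recorded in Theorems~1.3--1.5 of \cite{Ta} to locate $h$ and to describe the side $W$ of $\gamma$ toward which $h$ contracts. The crossing bound then acquires a clean geometric meaning: a trajectory entering $W$ is drawn toward the attracting core of $h$ and can no longer cross $\gamma$ again, so $W$ is forward-invariant, and this forward-invariance is exactly the mechanism making $\gamma$ a horizon saddle connection.

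Next I would analyse the recurrent dynamics inside $W$. Fix a direction $\theta$ for which a non-singular trajectory is trapped in $W$ and let $R$ be the first-return map of the directional flow to a short transversal $T\subset W$. This map is an affine interval exchange transformation whose slopes are powers of $\lambda$, and on the branches winding around the loop of $h$ the slope differs from $1$, so $R$ is contracting there. A contracting branch has a unique attracting periodic point, whose flow orbit closes up into a closed geodesic with holonomy a nontrivial power of $\lambda$, hence a \emph{hyperbolic} closed geodesic onto which the trapped trajectory must accumulate. \textbf{The main obstacle lies precisely here:} I must rule out the exotic $\omega$-limit behaviour that affine interval exchange transformations can exhibit, namely transversally Cantor minimal sets. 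The homothety $h$ rescales any such set to a strictly smaller copy of itself inside $W$, which already forbids invariant sets of positive transverse width; the delicate point is to exclude invariant Cantor sets of zero width, and this is where the uniform crossing bound and the structural constraints of \cite{Ta} are essential, as they cap the combinatorial complexity that such a minimal set would require.

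It remains to produce hyperbolic closed geodesics in a dense set of directions. The saddle connection $\gamma$ points in a single direction $\theta_{0}$, and for every $\theta$ in the open half-circle of directions crossing $\gamma$ transversally into the contracting side $W$, an entire interval of trajectories enters the forward-invariant region $W$; after discarding the countably many that eventually hit a singularity, the remaining ones are trapped and, by the previous step, spiral onto a hyperbolic closed geodesic traversed in direction $\theta$. Thus this whole open half-circle consists of hyperbolic directions. Since a closed geodesic traversed in direction $\theta$ is also traversed in direction $\theta+\pi$, the set of hyperbolic directions is invariant under the central symmetry $\theta\mapsto\theta+\pi$, so together with the half-circle just obtained it covers all of $\mathbb{S}^{1}$ except at most the two directions $\theta_{0}$ and $\theta_{0}+\pi$. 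In particular it is dense, which verifies Conjecture~\ref{conj:two} for $X$; Theorem~\ref{thm:equi} then delivers Conjecture~\ref{conj:three}, and with it Theorem~\ref{thm:horizon}.
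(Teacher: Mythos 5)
Your overall reduction is the same as the paper's: verify Conjecture~\ref{conj:two} for a surface with a horizon saddle connection and then invoke Theorem~\ref{thm:equi}. The paper does exactly this via Corollary~\ref{cor:horconj2}. But the way you try to verify Conjecture~\ref{conj:two} has a genuine gap, and you name it yourself: you must exclude trapped trajectories whose $\omega$-limit set is a transversally Cantor minimal set of an affine interval exchange. Such attractors genuinely occur for affine IETs (this is the classical wandering-interval phenomenon), your appeal to ``the uniform crossing bound and the structural constraints of \cite{Ta}'' is not an argument, and nothing in your text closes this. The paper never confronts this dichotomy at the level of return maps; it sidesteps it entirely with Proposition~\ref{prop:noEP} (no exceptional pencil exists in a dilation surface with nonempty boundary), whose proof is the real content of the paper: a minimal-counterexample induction on the number of triangles of a geodesic triangulation (classes $(\Delta)$ and $(\Delta_{min})$), using Veech's criterion (Theorem~\ref{thm:Veech}) and the embedded-triangle Lemma~\ref{lem:triangle} to peel off a triangle and contradict minimality. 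You would need either that machinery or a genuine substitute for it.

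There are two further problems with your setup. First, the ``trapping mechanism'' you posit --- a loop linking $\gamma$ with holonomy a nontrivial homothety $h$, contracting toward a forward-invariant side $W$ --- is not justified and is not how horizon saddle connections work in general; in the two-chamber example the saddle connection is horizon simply because its complement is disconnected, with no distinguished contracting side. The paper instead extracts a trapped pencil purely combinatorially (Lemma~\ref{lem:horipencil}): within any open set $U$ of directions, take a trajectory realizing the maximal crossing number $k$ with $L$ and start a pencil at its last intersection point; that pencil can never meet $L$ again. Second, your conclusion that \emph{every} direction in an open half-circle is hyperbolic (so that the hyperbolic directions miss at most two points of $\mathbb{S}^{1}$) is far stronger than what the trapping argument could give even if repaired --- for a fixed direction the trapped trajectory could still hit a singularity or (absent the missing step) accumulate on a minimal set --- and it is stronger than what the paper proves or needs; density, obtained one open set $U$ at a time, suffices.
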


The two results we focus on are about dilation surfaces without boundary. However, proving these two results involves using surfaces with boundary. That is why most intermediate results in this paper will apply to the case of surfaces with boundary.\newline

The organization of the paper is the following:
\begin{itemize}
\item In Section~\ref{sec:Gene}, we give background on dilation structures, their trajectories and horizon saddle connections.
\item In Section~\ref{sec:HC}, we define hyperbolic cylinders and prove preliminary results that involve such cylinders.
\item Exceptional pencils are introduced in Section~\ref{sec:EPD}. We then prove a general result of existence for hyperbolic closed geodesics in dilation surfaces with boundary.
\item In Section~\ref{sec:MSD}, we prove Theorems \ref{thm:equi} and \ref{thm:horizon}.\newline
\end{itemize}

\section{Generalities on dilation surfaces}\label{sec:Gene}

\begin{defn}
A dilation surface is a compact topological surface $X$ (possibly with nonempty boundary) with a finite set $\Sigma \in X$ of
singularities and an atlas of charts on $X \setminus \Sigma$ with values in $\mathbb{C}$ and such that:
\begin{itemize}
\item transition maps are of the form $z \mapsto az+b$ with $a \in \mathbb{R}_{+}^{\ast}$ and $b \in \mathbb{C}$;
\item the geometric structure extends to every element of $\Sigma$ with a local model characterized by its topological index and its dilation ratio (see Subsection 2.2);
\item the boundary is a finite union of saddle connections (see Subsection 2.3).
\item there is at least one element of $\Sigma$ on each boundary component.
\end{itemize}
\end{defn}

\subsection{Linear holonomy}

In a dilation surface $X$, we can cover every closed path $\gamma$ of $X \setminus \Sigma$ with charts of the atlas. The transition map between the first chart and the last chart is an affine map and its linear part is well-defined. This number is a topological invariant called the \textit{dilation ratio} of the loop.\newline
Since directions are defined globally, every loop can also be given a \textit{topological index} (thereby generalizing the winding number in the case of the flat plane). The local geometry of a conical singularity is characterized by a linear holonomy $\rho(\gamma)$ and a topological index $i(\gamma)$ where $\gamma$ is a positive simple loop around the singularity. It is obtained by cutting along a ray in a flat cone of angle $i(\gamma)2\pi$ and then identifying the two sides by a homothety of ratio $\rho(\gamma)$.\newline

The local model is slightly different for singularities belonging to the boundary of the surface. They simply are flat cones with an arbitrary angle between the two boundary geodesic rays.\newline

A \textit{marked point} is a singularity of angle $2\pi$ (or of angle $\pi$ if ever it belongs to the boundary) and for which the dilation ratio is trivial.\newline

\subsection{Trajectories}

Since directions are well-defined in a dilation structure, geodesics (arcs locally conjugated to straight lines in the charts) either are simple closed geodesics or are non-closed and have no self-intersection.\newline
A \textit{saddle connection} is a geodesic segment of $X \setminus \Sigma$ with both endpoints being conical singularities.\newline
A \textit{separatrix} is a geodesic line starting from a conical singularity.

\begin{defn}
In a dilation surface $X$, a \textit{pencil} $P(x_{0},U)$ is a one-parameter family of trajectories starting from some point $x_{0} \in X$ with a direction belonging to an open interval $U \subset \mathbb{S}^{1}$.
\end{defn}

It should be noted that if $x_{0}$ is a regular point, then pencil $P(x_{0},U)$ is completely characterized by its starting point and its interval of directions. However, if $x_{0}$ is a conical singularity, then there may be several such pencils.\newline

We state some general results showing how the dynamics of separatrices control the dynamics of the other trajectories.

\begin{prop}\label{prop:separatrix}
For any dilation surface $X$ with boundary $\partial X$ and a direction $d \in \mathbb{S}^{1}$, if every separatrix in direction $d$ or $-d$ crosses the boundary in some regular point, then every trajectory in direction $d$ or $-d$ either crosses the boundary or hits a singularity.
\end{prop}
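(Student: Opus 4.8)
The plan is to argue by contraposition: assuming that some trajectory in direction $d$ (say, followed in forward time) neither crosses $\partial X$ nor hits a singularity, I will produce a separatrix in direction $d$ or $-d$ that never reaches $\partial X$, contradicting the hypothesis. Before that, I would draw two reductions out of the hypothesis itself. A saddle connection in direction $d$ is a separatrix that \emph{terminates} at a singularity rather than crossing the boundary at a regular point, so the hypothesis forbids any saddle connection in direction $d$ or $-d$; in particular no boundary component is a saddle connection in direction $\pm d$, hence $d$ is transverse to $\partial X$ at every boundary point. I will use two consequences repeatedly: (i) a trajectory in direction $d$ cannot approach $\partial X$ without immediately crossing it, so the condition ``crosses the boundary'' is open and stable; and (ii) two distinct trajectories in direction $\pm d$ never meet, since at a common point they would be tangent and hence locally, thus globally, equal. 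In particular no trajectory can cross a separatrix.

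Now suppose $\gamma$ is a forward trajectory in direction $d$ from a regular point that remains in the interior for all time and avoids $\Sigma$. By compactness of $X$ its $\omega$-limit set $\omega(\gamma)$ is nonempty, compact and invariant, and by (i) it is bounded away from $\partial X$, so it lies in the interior. There are only finitely many separatrices in directions $d$ and $-d$ (finitely many prongs at finitely many singularities), and by hypothesis each of them is a compact arc ending at a regular point of $\partial X$. I would cut $X$ along all of them. By (ii) the trajectory $\gamma$ is parallel to, hence disjoint from, every such cut, so $\gamma$ is contained in a single complementary piece $R$ whose directional foliation carries no singularity in its interior and is tangent to the separatrix part of $\partial R$ while transverse to the $\partial X$ part. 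If $R$ were a disc, index considerations would rule out any recurrent or closed leaf and every leaf would run from boundary to boundary, forcing $\gamma$ to exit through $\partial X$; since $\gamma$ does not exit, $R$ must carry nontrivial topology supporting the recurrent set $\omega(\gamma)$.

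It then remains to convert this recurrence into an interior separatrix. I would distinguish two cases. If $\omega(\gamma)$ contains a closed geodesic $c$, then $c$ lies in the interior by transversality and $\gamma$ spirals onto it, so $c$ is a limit cycle with nontrivial dilation ratio; tracking the trajectories asymptotic to $c$ outward until the first singularity is met yields a separatrix that spirals onto $c$, hence stays within a fixed neighbourhood of the interior curve $c$ forever and never reaches $\partial X$. If instead the recurrence is aperiodic, I would apply a sideways-pushing argument to a bi-infinite interior leaf $\delta$ of $\omega(\gamma)$: moving the starting point of $\delta$ transversally, the strip of parallel interior leaves cannot be extended indefinitely without foliating an interior subsurface by non-crossing recurrent leaves and thereby producing a closed geodesic (the previous case), so some nearby leaf first meets a singularity. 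The resulting separatrix stays in a neighbourhood of $\omega(\gamma)$, hence in the interior, and again never crosses $\partial X$. Either way the hypothesis is contradicted, which proves the contrapositive.

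The main obstacle is precisely this last structural step: passing from the abstract recurrence of $\omega(\gamma)$ to a genuine separatrix that provably stays in the interior. The delicate point is to guarantee that the limiting separatrix does not drift to $\partial X$, which is exactly where the transversality (i) and the non-crossing property (ii) must be combined with a Poincar\'e--Bendixson-type control of the directional foliation on the cut surface. The periodic and aperiodic cases, although morally identical, require separate handling, the former through the geometry of the limit cycle and the latter through the limiting argument on parallel leaves.
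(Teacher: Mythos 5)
Your setup---cutting $X$ along the finitely many separatrices in directions $\pm d$, each of which is a compact arc by hypothesis, and confining the putative non-exiting trajectory $\gamma$ to a single complementary piece $R$ with no interior singularities---is exactly the paper's starting point. The divergence, and the gap, lies in what is done with $R$. The paper never analyzes $\omega(\gamma)$: it observes that at every vertex of $R$ the cone angle is at most $\pi$ (a vertex created by a separatrix meeting $\partial X$ at a regular point has angle $<\pi$, and an original conical singularity presenting an angle $>\pi$ inside $R$ would emit a separatrix in direction $\pm d$ into the interior of $R$, contradicting that all such separatrices were cut), so Gauss--Bonnet forces $R$ to be either a flat polygon or a cylinder all of whose boundary angles equal $\pi$; the cylinder case is excluded because its boundary would consist of saddle connections in direction $\pm d$, which the hypothesis forbids; and a polygon is simply connected, hence carries a translation structure, in which every leaf runs from boundary to boundary. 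Your proposal instead accepts that $R$ may have nontrivial topology and tries to extract an interior separatrix from the recurrence of $\gamma$, which is precisely the step you flag as ``the main obstacle'' and do not carry out.

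Concretely, two of your steps fail as stated. In the aperiodic case, the claim that a subsurface foliated by non-crossing recurrent leaves thereby produces a closed geodesic is unjustified and false in the generality you need: a minimal component of a translation surface is foliated by recurrent leaves with no closed geodesic in that direction, and for dilation surfaces the first-return maps are affine interval exchanges, which can even exhibit exceptional (Denjoy-type) minimal sets; so the intended reduction of the aperiodic case to the periodic one does not close. In the periodic case, ``tracking the trajectories asymptotic to $c$ outward until the first singularity is met'' presumes that the maximal one-sided basin of the limit cycle is bounded by a singularity, whereas it could instead be bounded by $\partial X$ (backward orbits in the basin may exit through the boundary); and even when a singularity is met first, one must still show that the limiting separatrix continues \emph{into} the basin, hence spirals onto $c$, rather than along the basin's frontier toward $\partial X$. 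Both difficulties evaporate once one knows that $R$ is simply connected, which is the angle-count-plus-Gauss--Bonnet observation your argument is missing.
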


\begin{proof}
Without loss of generality, we can assume that $d$ is vertical. Since separatrices travel between singularities and boundary components, they have to be compact segments. In a given direction, there are finitely many such separatrices (each conical singularity has a finite angle). Cutting along them, we decompose $X$ into connected components $X_{1},\dots,X_{k}$.\newline

We then consider one of these components $X_{i}$. It is a dilation surface with boundary $\partial X_{i}$. Let $x$ be a singularity of $X_{i}$. There are two cases:
\begin{itemize}
    \item[(i)] $x$ is the intersection of a vertical separatrix of $X$ with a regular point of $\partial X$;
    \item[(ii)] $x$ is a conical singularity of $X$.
\end{itemize}
In case (i), the conical angle of $X_{i}$ at $x$ is strictly smaller than $\pi$.\newline
In case (ii), if the conical angle of $X_{i}$ at $x$ is strictly larger than $\pi$, then the interior of $X_{i}$ contains a vertical separatrix (this contradicts the definition of $X_{i}$).\newline
Consequently, at each singularity of $X_{i}$ the magnitude of the angular sector is at most $\pi$. In particular, every singularity of $X_{i}$ belongs to $\partial X_{i}$ (conical angles of interior singularities are integer multiples of $2\pi$).\newline
Up to adding an arbitrary number of marked points, we can decompose $X_{i}$ into flat triangles and apply the Gauss-Bonnet formula. The total angle defect of $X_{i}$ is nonnegative so that $X_{i}$ is either a polygon (contractible domain with a boundary formed by saddle connections) or a topological cylinder such that the angle at each singularity of $\partial X_{i}$ is equal to $\pi$.\newline
In the latter case, this implies that singularities of $\partial X_{i}$ are singularities of $X_{i}$ and therefore $\partial X_{i}$ is formed by saddle connections of $X$. These saddle connections are vertical (otherwise the interior of $X_{i}$ would contain some vertical separatrix). The hypothesis that every vertical separatrix crosses the boundary in a regular point implies that there is no vertical saddle connection. We get a contradiction and thus $X_{i}$ is a polygon.\newline 
Since polygons are simply connected, their dilation structure reduces to a translation structure. In a translation surface of finite area, a trajectory can hit a singularity, cross the boundary, be minimal in some domain or be periodic (see Proposition 5.5 in \cite{Ta1} for the classification of invariant components). Minimal trajectories only appear in surfaces of genus at least one and every loop in a polygon is contractible. Therefore, in each polygon, every vertical trajectory crosses $\partial X_{i}$. Boundary segments of $\partial X_{i}$ are either subsets of saddle connections of $X$ or vertical separatrices of $X$. However, a vertical trajectory cannot intersect a vertical separatrix. Consequently, every vertical trajectory in $X_{i}$ crosses a boundary saddle connection of $X$.\newline
\end{proof}

\subsection{Horizon saddle connections}\label{sec:Horizon}

The following notion has been introduced in \cite{Ta} as a geometric feature of some strict dilation surfaces.

\begin{defn}
In a dilation surface $X$, a \textit{horizon saddle connection} is a saddle connection $L$ for which there exists a number $k$ such that no trajectory of $X$ crosses $L$ more than $k$ times. Also, we will say that a closed geodesic crossing $L$ crosses it infinitely many times.
\end{defn}

In a translation surface without boundary, generic trajectories are dense in the whole surface. Therefore, given a saddle connection we can find a trajectory crossing it infinitely many times. There is no horizon saddle connection in a translation surface.\newline

The bound on the number of intersection points of horizon saddle connections can be refined by considering trajectories traveling in a given direction.

\begin{lem}
For a \textit{horizon saddle connection} $L$ in a dilation surface $X$ and for any $d \in \mathbb{S}^{1}$, we denote by $k(d)$ the maximal number of intersection points a trajectory belonging to direction $d$ or $-d$ may have with $L$. Subset $S(r) \subset \mathbb{S}^{1}$ defined by the condition $k(d) \geq r$ is an open set.
\end{lem}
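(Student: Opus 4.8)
The plan is to show that $S(r)$ is open by proving that if $d_0 \in S(r)$, then all directions $d$ sufficiently close to $d_0$ also belong to $S(r)$. By definition, $d_0 \in S(r)$ means there is a trajectory $\tau_0$ in direction $d_0$ (or $-d_0$) crossing $L$ at least $r$ times. Let $p_1, \dots, p_r$ denote $r$ of these intersection points, indexed along $\tau_0$. The key idea is that each such intersection point persists under small perturbations of the direction, and that the arcs of trajectory joining consecutive intersection points also persist, so the perturbed trajectory still meets $L$ at least $r$ times.

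The main technical ingredient is a local stability statement: trajectories depend continuously on their direction, in the sense that a compact arc of a trajectory in direction $d_0$ that avoids the singular set $\Sigma$ can be perturbed to a nearby arc in direction $d$ for $d$ close to $d_0$. Concretely, first I would choose the trajectory $\tau_0$ and truncate it to a compact arc $\sigma_0$ running from slightly before $p_1$ to slightly after $p_r$. Since $\sigma_0$ is compact and, after a small initial adjustment of its endpoints, crosses $L$ \emph{transversally} at each $p_i$ (a trajectory and a saddle connection meet transversally whenever their directions differ, which we may assume by excluding the at most one direction parallel to $L$), I can cover $\sigma_0$ by finitely many charts of the dilation atlas in which the flow is a straight-line flow. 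In each chart the dependence on direction is explicitly affine, hence continuous, and $L$ appears as a straight segment; transversality of the intersections is an open condition preserved under small changes of slope.

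\emph{The gluing step is where I expect the main obstacle.} Continuity of the flow within a single chart is immediate, but I must guarantee that the perturbed arc stays within the union of charts and does not accidentally run into a singularity of $\Sigma$ before completing all $r$ crossings. The clean way to handle this is a compactness argument: the compact arc $\sigma_0$ lies at positive distance from $\Sigma$, so there is a tubular neighborhood of $\sigma_0$ disjoint from $\Sigma$ inside which the flow is well-defined and depends continuously (indeed smoothly) on the direction parameter. For $d$ close enough to $d_0$, the trajectory segment emanating from the perturbed starting point remains inside this tube for the whole relevant time, and by transversality each of the $r$ transverse crossings of $L$ survives, possibly shifted slightly along $L$. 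This yields a trajectory in direction $d$ crossing $L$ at least $r$ times, so $k(d) \geq r$ and $d \in S(r)$.

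Finally, I would note the one direction that must be excluded from the transversality argument, namely the direction of $L$ itself: a trajectory parallel to $L$ either coincides with a portion of $L$ or does not cross it, so it contributes nothing to $k(d)$ for nearby $d$. Handling this degenerate direction separately is routine, since a direction $d_0$ parallel to $L$ with $k(d_0) \geq r \geq 1$ would force $\tau_0$ to equal $L$, which crosses $L$ zero times in the transverse sense; thus either such $d_0$ does not lie in $S(r)$, or the count is witnessed by a genuinely transverse nearby trajectory to which the argument above applies. Assembling these pieces shows every point of $S(r)$ has a neighborhood contained in $S(r)$, establishing that $S(r)$ is open.
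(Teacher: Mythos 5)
Your proposal is correct and follows essentially the same route as the paper, which simply observes that slight deviations of a trajectory with $k$ intersection points yield nearby trajectories with at least $k$ intersection points. You have merely spelled out the compactness and transversality details that the paper leaves implicit.
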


\begin{proof}
We simply have to note that for any trajectory $T$ having $k$ intersection points with $L$, slight enough deviations of trajectory $T$ provide trajectories with at least $k$ intersection points with $L$. 
\end{proof}

In order to prove Conjecture \ref{conj:three} for dilation surfaces with horizon saddle connections, we  will use the following lemma.

\begin{lem}\label{lem:horipencil}
Let us consider a (closed) dilation surface $X$ with a horizon saddle connection $L$ and an open subset $U \subset \mathbb{S}^{1}$. The surface (with boundary) $X \setminus L$ contains a pencil $P$ of trajectories that has directions contained in $U$ and such that no trajectory of $P$ crosses the boundary of $X \setminus L$.
\end{lem}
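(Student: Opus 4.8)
The plan is to realize $P$ as a family of forward rays issued from a single regular point placed just beyond the last intersection with $L$ of a trajectory that meets $L$ as many times as possible. Write $k$ for the horizon bound of $L$, so that $k(d) \in \{0,1,\dots,k\}$ for every $d$, and set $r_0 = \max_{d \in U} k(d)$; this maximum is attained because $k(d)$ is integer valued and bounded above by $k$. If $r_0 = 0$ then no trajectory with direction in $U$ meets $L$ at all, and any pencil of forward rays issued from a regular point with directions in $U$ already works; so assume $r_0 \geq 1$. By the previous lemma the set $\{\, d \in U : k(d) \geq r_0 \,\}$ is open, and on $U$ it coincides with $\{\, d : k(d) = r_0 \,\}$ since $k(d) \leq r_0$ there; choose an open interval $V$ inside it and a direction $d_0 \in V$. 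Since $r_0 \geq 1$, the direction $d_0$ cannot be that of $L$ (a trajectory parallel to $L$ cannot cross it transversally), so $d_0$ is transverse to $L$, and after shrinking $V$ we may assume every direction in $V$ is transverse to $L$. Fix a trajectory $T$ in direction $d_0$ meeting $L$ in exactly $r_0$ points, let $\sigma$ be the compact sub-arc of $T$ running from just beyond the last of these intersections back to just before the first, and let $x_0$ be its regular endpoint lying immediately after the final crossing, so that $x_0$ lies in the interior of $X \setminus L$.

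Next I would perturb the direction. The $r_0$ intersections of $\sigma$ with $L$ are transverse and isolated, and $\sigma$ is a compact geodesic arc disjoint from $\Sigma$; hence by continuous dependence of trajectories on their direction, for every $d$ in a small interval $V' \subseteq V$ containing $d_0$ the backward ray issued from $x_0$ in direction $d$ shadows $\sigma$ closely and still crosses $L$ transversally at least $r_0$ times. On the other hand the full trajectory through $x_0$ in direction $d$ is a single trajectory of direction $d$, so it meets $L$ at most $k(d) = r_0$ times. Comparing the two bounds forces all $r_0$ of its intersections to lie on the backward ray, so the forward ray from $x_0$ in direction $d$ does not meet $L$ at all. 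Therefore the pencil $P = P(x_0, V')$, which is well defined because $x_0$ is a regular point, consists of forward trajectories that never cross $L$; since the boundary of $X \setminus L$ is exactly the two sides of $L$, no trajectory of $P$ crosses $\partial(X \setminus L)$, and its directions lie in $V' \subset U$, as required.

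The main obstacle is the perturbation step, namely the persistence of the $r_0$ transverse crossings under a change of direction. This rests on continuous dependence of trajectories on the initial direction, which is only available on compact time intervals and away from the singular set; this is why I first trim $T$ to the compact regular arc $\sigma$ at positive distance from $\Sigma$ and record that its crossings with $L$ are transverse, a point guaranteed by $r_0 \geq 1$ since this excludes the direction of $L$. Once this is in place the counting argument is immediate, the only remaining care being to keep $V'$ small enough that the shadowing backward ray stays within the chosen neighbourhood of $\sigma$ throughout the relevant time interval, thereby completing all $r_0$ crossings before it can approach a singularity.
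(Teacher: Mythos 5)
Your proof is correct and follows essentially the same route as the paper: take the maximal intersection count $r_0$ over directions in $U$, pick an extremal trajectory, base the pencil at a point past its last crossing of $L$, and use openness/persistence of the $r_0$ backward crossings under small changes of direction to conclude that the forward rays cannot meet $L$ without violating the bound. The only cosmetic difference is that the paper bases the pencil at the last intersection point $x_k \in L$ itself, while you move to a regular point just beyond it and spell out the compactness and transversality underlying the perturbation step.
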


\begin{proof}
Let $k$ be the largest integer such that $U \cap S(k)$ is not empty. Integer $k$ is the maximal number of intersection points a trajectory may have with saddle connection $L$ when the direction of the trajectory belongs to $U$.\newline
Let trajectory $T$ belonging to direction $d \in \mathbb{S}^{1}$ be a trajectory with $k$ intersection points with $L$. Let $x_{1},\dots,x_{k} \in L$ be the intersection points (with the order induced by direction $d$). There is an open neighborhood $\Omega$ of $d$ such that every trajectory starting from $x_{k}$ in a direction chosen in $-\Omega$ has at least $k$ intersection points with $L$. This implies that trajectories in $P(x_{k},\Omega)$ cannot intersect $L$. Otherwise the bound property that defines $k$ would be violated.\newline
\end{proof}

\section{Hyperbolic cylinders}\label{sec:HC}

A closed geodesic is said to be \textit{hyperbolic} if its holonomy has a nontrivial dilation ratio. By convention, hyperbolic geodesics are always oriented in such a way that their holonomy is contracting. Therefore, their direction can be defined unambiguously in $\mathbb{S}^{1}$ (and not only in $\mathbb{RP}^{1}$).\newline
Following \cite{DFG}, hyperbolic closed geodesics form one-parameter families called \textit{hyperbolic cylinders}. Each of these cylinders is a portion of annulus (where the inner and outer arcs are identified) covering some angular sector and bounded by saddle connections, see Figure~2. We state a first lemma.

\begin{figure}
\includegraphics[scale=0.9]{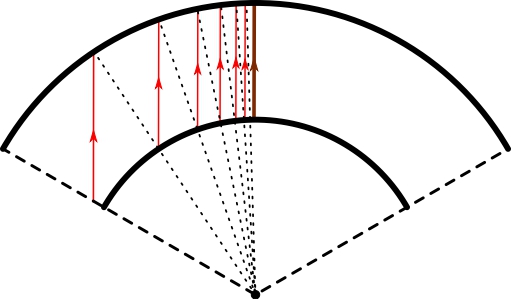}
\caption{A hyperbolic cylinder with a positive vertical trajectory accumulating on a hyperbolic closed geodesic. The inner circular arc is identified by the outer circular arc.} 
\end{figure}

\begin{lem}\label{lem:hypcyl}
If a trajectory $T$ traveling in direction $d$ enters a hyperbolic cylinder $C$ containing hyperbolic closed geodesics of direction $d$ or $-d$, then $T$ accumulates on one of these hyperbolic closed geodesics.
\end{lem}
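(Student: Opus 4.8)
The plan is to argue inside the explicit model of a hyperbolic cylinder recalled above. Developing $C$ into $\mathbb{C}$, one realizes it as the quotient $V/\langle \gamma \rangle$ of a genuine angular sector $V = \{ r e^{i\phi} : r > 0,\ \alpha < \phi < \beta \}$ by the homothety $\gamma(z) = \lambda z$, where $\lambda \in (0,1)$ is the common dilation ratio of the closed geodesics of $C$ and where $\phi$ is the global direction coordinate. In this model the hyperbolic closed geodesics of $C$ are exactly the images of the radial rays $R_{\phi_0} = \{ r e^{i\phi_0} : r > 0\}$, $\phi_0 \in (\alpha,\beta)$: each is $\gamma$-invariant, and since $\gamma$ contracts towards the puncture, the contracting (hence chosen) orientation of $R_{\phi_0}$ points inward, giving it direction $\phi_0 + \pi$. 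Consequently $C$ contains a closed geodesic of direction $-d = d+\pi$ precisely when $d \in (\alpha,\beta)$, and one of direction $d$ precisely when $d+\pi \in (\alpha,\beta)$.

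First I would lift $T$ to $V$, obtaining a straight line $\tilde T(t) = p + t\, e^{id}$ that is parallel to $R_d$ and $R_{d+\pi}$ and misses the puncture. Writing $\tilde T(t) = r(t) e^{i \phi(t)}$, the angle $\phi(t)$ is strictly monotone, sweeps an open interval of length $\pi$, and satisfies $\phi(t) \to d$ as $t \to +\infty$ and $\phi(t) \to d + \pi$ as $t \to -\infty$, while $r(t) \to +\infty$ at both ends. Hence $\{ t : \tilde T(t) \in V \}$ is a single interval $(t_1, t_2)$, whose finite endpoints record $T$ crossing $\partial C$ and whose infinite endpoints record $\phi(t)$ limiting onto an interior ray $R_d$ or $R_{d+\pi}$.

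The decisive step is to exploit the hypothesis that $T$ \emph{enters} $C$ in order to fix the orientation. Entering $C$ through the boundary in forward time means the entry parameter $t_1$ is finite. Since $\phi(t) \to d + \pi$ monotonically as $t \to -\infty$, finiteness of $t_1$ forces $d + \pi \notin (\alpha,\beta)$, so $C$ contains no closed geodesic of direction $d$; together with the hypothesis that $C$ contains a closed geodesic of direction $d$ or $-d$, this yields $d \in (\alpha,\beta)$, i.e. $R_d \subset C$ and $t_2 = +\infty$. Thus the forward orbit of $T$ stays in $C$ for all time, its angle $\phi(t)$ tends monotonically to $d$, and $r(t) \to +\infty$ forces it to wind around the annulus $C$ infinitely often. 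In the quotient, these two facts say exactly that $T$ accumulates on the core circle $g = R_d$, which is a closed geodesic of direction $-d$. Equivalently, on a cross-section transverse to $g$ the first-return map of the direction-$d$ flow is the contraction $h \mapsto \lambda h$, so the successive intersections of $T$ with the section converge to $g$.

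I expect the main obstacle to be precisely this orientation bookkeeping. A trajectory parallel to $g$ but travelling in $g$'s own contracting direction has an \emph{expanding} return map and spirals off $g$ to leave $C$ through a boundary saddle connection; so the crux is to show that the hypothesis ``$T$ enters $C$'' selects the contracting branch and rules out an early exit through the opposite boundary, which is where the strict monotonicity of $\phi(t)$ is used. Once the contracting branch is identified, the remaining content is the elementary fact that an orbit of a contraction converges to its fixed locus, together with the verification that winding infinitely in $r$ produces accumulation on all of $g$ rather than on a single point.
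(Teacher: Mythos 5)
Your proof is correct, but it takes a genuinely more explicit route than the paper's. The paper disposes of the lemma in three lines of soft topology: a trajectory of direction $d$ cannot cross a closed geodesic of direction $d$ or $-d$ (parallel trajectories cannot intersect transversally), that geodesic separates the two boundary components of $C$, so a trajectory entering through one component can never reach the other and is trapped, hence accumulates on a limit cycle. You instead develop $C$ as the quotient of an angular sector by $z\mapsto\lambda z$ and run everything through the monotonicity of the polar angle $\phi(t)$ along a lifted line. What your version buys is precision exactly where the paper is terse: (i) it rules out entering and exiting through the \emph{same} boundary component, a possibility the paper's separation argument does not literally address, since a strictly monotone $\phi$ cannot take the value $\beta$ twice; and (ii) it replaces the bare assertion ``accumulates on some limit cycle'' by the explicit spiralling onto the ray $R_d$. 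The one point to tighten is the case of cylinders of angle greater than $\pi$ (these do occur and matter elsewhere, cf.\ Theorem~\ref{thm:Veech}): there the sector is an immersed, not embedded, subset of $\mathbb{C}$, the direction $d$ has several real lifts $d+k\pi$, and your sentence ``finiteness of $t_1$ forces $d+\pi\notin(\alpha,\beta)$'' only excludes the lifts with $k\geq 1$. The repair is immediate --- the hypothesis then supplies a lift $d+k\pi\in(\alpha,\beta)$ with $k\leq 0$, whence $\alpha<d<\beta$ for the lift $d$ bounding the swept interval, so $t_2=+\infty$ and the limiting ray $R_d$ is interior --- but as written the step conflates a point of $\mathbb{S}^1$ with a specific real lift, and it is worth saying so explicitly.
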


\begin{proof}
Trajectory $T$ cannot cross a hyperbolic closed geodesic $\gamma$ belonging to the same direction (or the opposite). Moreover if $T$ enters cylinder $C$ through one boundary component, it cannot reach the other boundary component without crossing $\gamma$. Thus, $T$ never leaves the cylinder and accumulates on some limit cycle inside $C$, see Figure~2.
\end{proof}

In a given direction, there cannot be infinitely many distinct hyperbolic closed geodesics.

\begin{lem}\label{lem:finitegeo}
Let us consider a (compact) dilation surface $X$. For any direction $d \in \mathbb{S}^{1}$, $X$ contains at most finitely many hyperbolic closed geodesics pointing in direction $d$.
\end{lem}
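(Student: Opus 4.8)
The plan is to combine a topological counting bound with a rigidity statement asserting that each free homotopy class carries at most one such geodesic. Fix the direction $d$. First I would record two elementary facts. Any hyperbolic closed geodesic in direction $d$ is a simple closed curve (as recalled in Section~\ref{sec:Gene}, closed geodesics are simple and without self-intersection), and it is essential: a null-homotopic loop has trivial linear holonomy, hence dilation ratio $1$, contradicting hyperbolicity. Moreover, two \emph{distinct} hyperbolic closed geodesics in direction $d$ are disjoint, since a geodesic is determined by a point together with its direction there; if two geodesics in direction $d$ met at a point they would coincide near it and hence everywhere. Thus the hyperbolic closed geodesics in direction $d$ form a family of pairwise disjoint, essential, simple closed curves on the compact surface $X$.

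Second, I would invoke the standard topological fact that on a compact surface $X$ (possibly with boundary) there is a bound $N(X)$, depending only on the genus and the number of boundary components, on the cardinality of any family of pairwise disjoint, pairwise non-freely-homotopic essential simple closed curves (such a family extends to a pair-of-pants type decomposition). Consequently the lemma reduces to the claim that a single free homotopy class contains at most one hyperbolic closed geodesic in direction $d$: otherwise, by the pigeonhole principle, infinitely many such geodesics would force infinitely many into one class, contradicting the combination of per-class uniqueness with the disjointness-and-counting bound.

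The heart of the argument, and the step I expect to be the main obstacle, is this per-class uniqueness. Suppose $\gamma_1 \ne \gamma_2$ were freely homotopic hyperbolic closed geodesics in direction $d$; being disjoint, they cobound an embedded annulus $A \subset X$. The dilation structure on $A$ has cyclic holonomy generated by the homothety $h(z) = a(z-p)+p$ attached to the core class, with ratio $a \ne 1$ and fixed point $p$. Developing the universal strip of $A$ into $\mathbb{C}$, each boundary geodesic maps to an $h$-invariant straight line of direction $d$; since an invariant line of a homothety must pass through its fixed point, and there is exactly one line through $p$ in direction $d$, both $\gamma_1$ and $\gamma_2$ develop onto the \emph{same} line. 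I would then argue that this is incompatible with $A$ being an embedded flat annulus bounded by two distinct geodesics; equivalently, that the interior of $A$ must be foliated by direction-$d$ closed geodesics, so that $A$ is a genuine hyperbolic cylinder, and a hyperbolic cylinder (as described in Section~\ref{sec:HC}, whose closed geodesics sweep an angular sector bijectively) contains a unique closed geodesic in each direction. Either way one contradicts $\gamma_1 \ne \gamma_2$. The delicate point is controlling the developed image of $A$ near the fixed point $p$ and ruling out that the annulus ``wraps around'' $p$ in a way that could accommodate two genuinely distinct geodesics; this is exactly where the rigidity of the homothety holonomy has to be used carefully.
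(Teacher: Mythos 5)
Your first two steps (pairwise disjointness of the direction-$d$ closed geodesics, and the topological bound $N(X)$ on families of pairwise disjoint, pairwise non-homotopic essential simple closed curves, reducing the lemma to a per-class count) coincide with the opening of the paper's proof and are fine. The gap is in the step you yourself flag as the main obstacle: the per-class uniqueness you want is \emph{false}, and no amount of care with the developing map near the fixed point will rescue it. A hyperbolic cylinder is the quotient of a planar sector $\{re^{i\phi} : r>0,\ 0\le\phi\le\theta\}$ (unrolled, so $\theta$ may exceed $2\pi$) by a homothety $z\mapsto\lambda z$; its closed geodesics are the images of the radial rays, one for each $\phi\in[0,\theta]$, and the ray at angle $\phi$ points in direction $\phi\bmod 2\pi$ in $\mathbb{S}^{1}$. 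As soon as $\theta>2\pi$, the rays at angles $\phi$ and $\phi+2\pi$ give two distinct, disjoint, freely homotopic hyperbolic closed geodesics in the \emph{same} direction $d$. Such cylinders do occur in compact dilation surfaces (the paper explicitly discusses surfaces with hyperbolic cylinders of angle at least $\pi$, and nothing caps the angle at $2\pi$). Your developing-map computation correctly shows that both boundary curves of the annulus develop onto $h$-invariant lines through the fixed point $p$, but in the unrolled sector there are several radial rays realizing the direction $d$ --- this is exactly the loophole, and the parenthetical claim that a hyperbolic cylinder's closed geodesics sweep the circle of directions \emph{bijectively} is the point at which your argument breaks.

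The paper closes the argument differently: it shows that all direction-$d$ hyperbolic closed geodesics in a single free homotopy class must lie in one common hyperbolic cylinder (the $1$-parameter family of closed geodesics containing $\alpha$ extends across the annulus to $\alpha'$), and each additional geodesic of the same direction in that class forces the total angle of this cylinder to increase by a further multiple of $\pi$. Infinitely many geodesics in one class would therefore require a cylinder of infinite angle, which cannot exist in a compact surface. So the correct per-class statement is not ``at most one'' but ``at most $\lfloor\theta/2\pi\rfloor+1$, where $\theta$ is the (finite) angle of the ambient cylinder,'' and the finiteness in the lemma comes from compactness rather than from uniqueness. Your proposal needs this replacement to go through.
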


\begin{proof}
Closed geodesics in direction $d$ are disjoint because trajectories belonging to the same direction cannot intersect each other. In a compact surface (possibly punctured at the singularities), the number of homotopically distinct disjoint loops is topologically bounded. Thus, if $X$ contains infinitely many hyperbolic closed geodesics, there have to be infinitely many hyperbolic closed geodesics in the same homotopy class $[\alpha]$.\newline
If two hyperbolic closed geodesics $\alpha$ and $\alpha'$ belong to the same direction $d$ and the same homotopy class, then they bound a topological cylinder $C$ that is automatically a hyperbolic cylinder. Indeed, $\alpha$ belongs to a $1$-parameter family of closed geodesics with the same linear holonomy. That family can be extended to $\alpha'$. Cylinder $C$ is a hyperbolic cylinder the angle of which is an integer multiple of $\pi$ (otherwise it would not contain both $\alpha$ and $\alpha'$). Consequently, if homotopy class $[\alpha]$ contains infinitely many distinct hyperbolic closed geodesics that have $d$ as direction, these closed geodesics have to belong to a unique hyperbolic cylinder of infinite angle. We assumed $X$ is compact so we get a contradiction. There are finitely many hyperbolic closed geodesics of direction $d$ in $X$.
\end{proof}

We prove another lemma that will be useful when we prove the existence of hyperbolic cylinders.

\begin{lem}\label{lem:immercone}
Let $x_{0}$ be a point in a dilation surface $X$ and $U$ be an open interval of $\mathbb{S}^{1}$. Considering pencil $P(x_{0},U)$, if no trajectory hits a singularity or crosses the boundary, then $X$ contains a hyperbolic closed geodesic in a direction contained in $U$ or $-U$.
\end{lem}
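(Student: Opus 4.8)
The plan is to treat the pencil as an immersed cone and to extract a hyperbolic closed geodesic from a first-return (Poincar\'e) map. First I would record that the hypothesis makes every trajectory of $P(x_{0},U)$ complete: none hits a point of $\Sigma$ and none meets $\partial X$, so each stays forever in the compact set $X\setminus\Sigma$. Developing the pencil yields a local-dilation immersion $\phi$ of an infinite Euclidean sector $\Sigma$ (apex the developed image of $x_{0}$, opening angle $\lvert U\rvert$) into $X$, sending radial rays to the trajectories $T_{\theta}$, $\theta\in U$. Fixing the central direction $d_{0}\in U$, the orbit $T_{d_{0}}$ remains in the compact set $X\setminus\Sigma$, so its $\omega$-limit set is nonempty and I may pick a regular accumulation point $y$.

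Next I would build an affine first-return map. I choose a short transversal $\tau$ at $y$ transverse to $d_{0}$; then $T_{d_{0}}$ crosses $\tau$ along an infinite sequence $q_{1},q_{2},\dots$ accumulating on $\tau$. Because every transition map has the form $z\mapsto az+b$, the first-return map $f$ of the directional flow to $\tau$ is, near each $q_{n}$, affine, and its multiplier equals the dilation ratio of the corresponding return loop, which is a homotopy invariant of that loop. Passing to a convergent subsequence $q_{n_{k}}\to y'\in\tau$, I would then aim to show that the loops joining consecutive crossings eventually lie in a single free homotopy class; once this holds, $f$ has a constant multiplier $a$ near $y'$ and fixes $y'$ in the limit.

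Hyperbolicity would then follow from the geometry. If $a=1$ the return map is a local isometry, so $\lvert q_{n_{k+1}}-y'\rvert=\lvert q_{n_{k}}-y'\rvert$, and convergence forces $q_{n_{k}}=y'$, i.e. $T_{d_{0}}$ is already closed with translation holonomy; otherwise $a\neq 1$, and since distinct points converge to $y'$ one must have $a<1$, so $f$ is a contraction, $y'$ is its attracting fixed point, and the trajectory through $y'$ in direction $d_{0}$ is a closed geodesic whose holonomy contracts by $a$, hence a hyperbolic closed geodesic in a direction of $U$. A cleaner way to guarantee that the contracting case occurs is the area argument: $\Sigma$ has infinite area while $\phi(\Sigma)\subset X$ has finite area, so the cone cannot return to itself area-preservingly, and some return of a sub-cone is strictly contracting, which is precisely the situation $a<1$.

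The main obstacle is excluding the residual possibility that the orbit is minimal, equivalently that the return loops never stabilize in homotopy (the multipliers staying balanced, translation-like): this is the Poincar\'e--Bendixson-type dichotomy special to dilation surfaces. This is exactly where the two-parameter nature of the pencil is indispensable. A single direction may well be minimal---as on a flat torus, where the crossings $q_{n}$ equidistribute and no return loop stabilizes---but an open interval $U$ of non-escaping directions supplies an immersed cone of \emph{infinite} area, and the finiteness of $\operatorname{area}(X)$ forces genuine contraction rather than neutral recurrence. Finally, tracking the orientation of the contracting return places the resulting hyperbolic closed geodesic in $U$ or in $-U$.
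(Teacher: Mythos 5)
Your overall strategy (find a regular accumulation point, use recurrence of the developed pencil to produce a closed return, then decide whether the return is contracting) is close in spirit to the paper's, but two steps fail as written. First, you assert that developing the pencil yields an immersion of an \emph{infinite} Euclidean sector. This is false in general for dilation surfaces: a complete trajectory can have finite affine length in the developed picture (it spirals into a hyperbolic cylinder with geometrically shrinking steps), so the developing map of the pencil may only be defined on a bounded sector $\left\lbrace re^{i\theta} \,\vert\, 0\le r<1,\ \theta\in U \right\rbrace$ that does not extend to $r=1$. The paper must treat this as a separate case, and your ``infinite area of the cone versus finite area of $X$'' argument has no content there. Worse, even in the infinite-cone case the area comparison is not meaningful: a strict dilation surface carries no global area form (the transition maps $z\mapsto az+b$ rescale area), so ``$X$ has finite area'' is not a well-defined statement --- yet this is exactly the device you rely on to exclude the neutral/minimal alternative, the obstacle you yourself flag as the main one. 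The stabilization of the free homotopy classes of the return loops, which your first-return argument needs, is therefore left unproven.

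Second, your dichotomy at multiplier $a=1$ ends with ``$T_{d_{0}}$ is already closed with translation holonomy,'' which does not prove the lemma: a translation-holonomy closed geodesic is not hyperbolic. The paper closes this case differently and without return-map machinery: the self-overlap of the developed pencil near the accumulation point directly produces a closed geodesic $\alpha$ in a direction of $U\cup -U$ (the image of the segment joining two preimages of a doubly covered point), and if $\alpha$ lay in a flat cylinder, that cylinder would be bounded by saddle connections, so the sub-pencil $P(A,U)$ based at a point $A\in\alpha$ --- which is contained in $P(x_{0},U)$ --- would contain a trajectory hitting a boundary singularity of the flat cylinder, contradicting the hypothesis. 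You would need an argument of this kind, exploiting the full open interval $U$ of non-escaping directions rather than the single trajectory $T_{d_{0}}$, to rule out the translation case; recurrence of one trajectory alone cannot do it, as your own flat-torus example shows.
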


\begin{proof}
Let $T$ be a trajectory of the pencil $P(x_{0},U)$ and $y$ be one of the accumulation points of $T$. Without loss of generality, we may assume that $y$ is not a singularity of $X$. Indeed, if a conical singularity $A$ of $X$ is an accumulation point of $T$, then there is a separatrix $S$ starting from $A$ in the direction of $T$ such that every point of $S$ is also an accumulation point of $T$. In that case we can choose $y$ among regular points of $S$. Note that $y$ cannot belong to the boundary of $X$ because this would imply that some trajectory close to $T$ in $P(x_{0},U)$ crosses the boundary.\newline
Let $D$ be a small disk around $y$ for the local metric of $X$. The intersection between $D$ and $T$ is formed by infinitely many parallel segments accumulating on a subset that contains at least the diameter $[y_{0}y_{1}]$ of the disk in the direction of $T$.\newline

\begin{figure}
\includegraphics[scale=0.7]{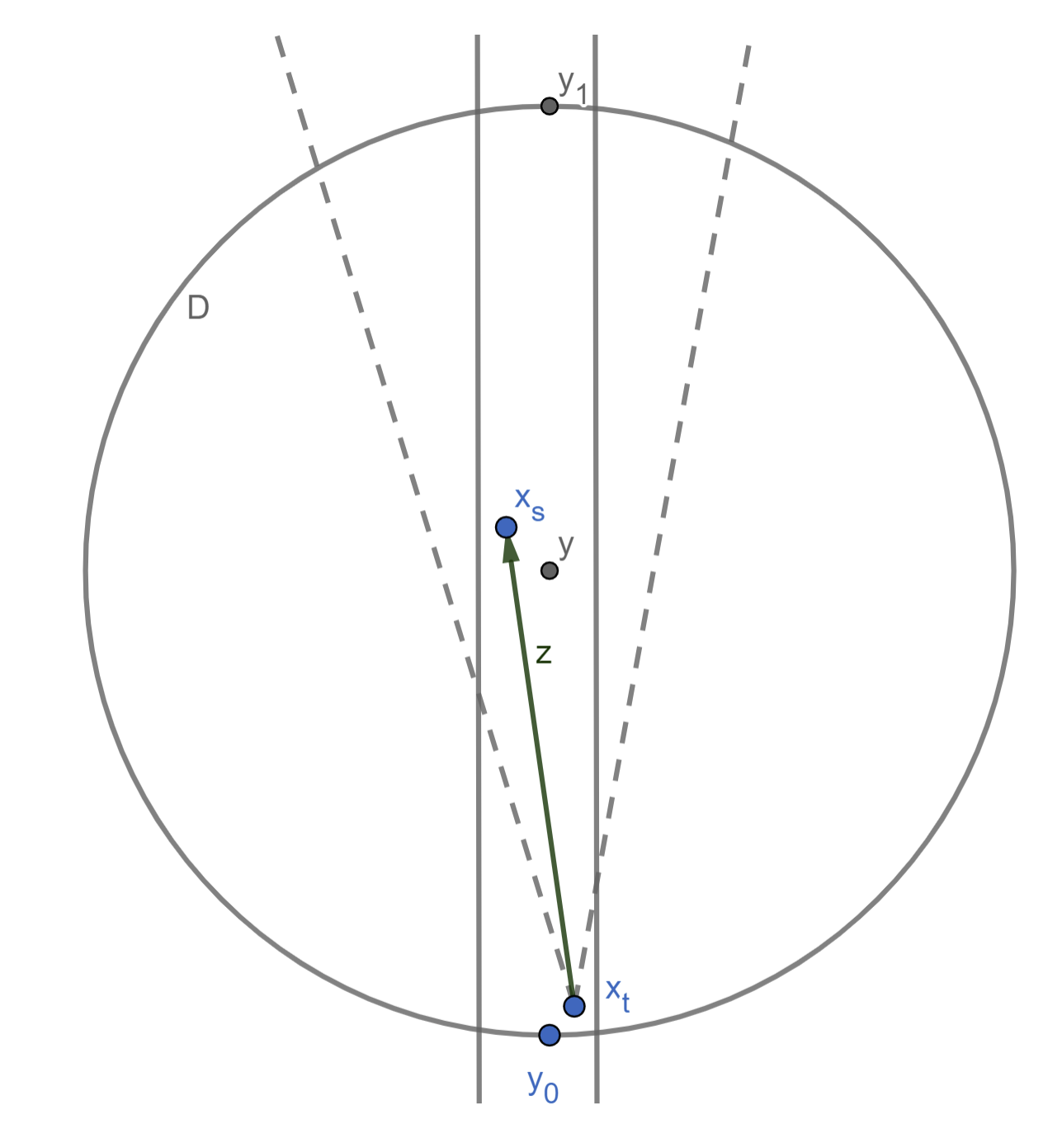}
\caption{Disk $D$ with a pencil of trajectories starting from $x_{t}$ and containing $x_{s}$.} 
\end{figure}

Let us first assume that pencil $P(x_{0},U)$ coincides with the image $\phi(\mathcal{C})$ of a infinite cone $\mathcal{C}=\left\lbrace re^{i\theta} \vert ~ r \geq 0, \theta \in U \right\rbrace$ by an affine immersion $\phi$. Trajectory $T$ is formed by points $x_{t}=\phi(te^{i\theta_{0}})$ where $t \in \mathbb{R}^{+}$.\newline
One can find a point $x_{s}$ that belongs to $D \cap P(y_{0},U)$. Since $y_{0}$ is an accumulation point of $T$, one can also find $t>s$ such that $x_{t}$ is arbitrarily close to $y_{0}$ and thus such that $x_{s} \in P(x_{t},U)$ (see Figure~3). Since $P(x_{0},U)$ is the image of the immersion of an infinite cone, we must have $P(x_{t},U) \subset P(x_{0},U)$. Consequently, the same point $x_{s}$ is the image of two distinct points that belong to cone $\mathcal{C}$. As such $x_{s}=\phi(se^{i\theta_{0}})=\phi(te^{i\theta_{0}}+z)$ where $t>s$ and $z \in \mathcal{C}$. Let $I \subset \mathcal{C}$ be the segment joining $se^{i\theta_{0}}$ and $te^{i\theta_{0}}+z$ in $\mathcal{C}$. Since $(t-s)e^{i\theta_{0}}+z \in \mathcal{C}$, the direction of segment $I$ belongs to $U \cup -U$. Besides, $\phi(se^{i\theta_{0}})=\phi(te^{i\theta_{0}}+z)$ so $\phi(I)$ is a closed geodesic $\alpha$ of $X$ with direction belonging to $U \cup -U$.\newline
Every such geodesic $\alpha$ belongs to a cylinder. If the cylinder is flat, then for any point $A \in \alpha$, some trajectory in pencil $P(A,U)$ hits a boundary singularity $B$ of the flat cylinder. Since pencil $P(A,U)$ is contained in $P(x_{0},U)$, then one trajectory of $P(x_{0},U)$ hits singularity $B$ and we get a contradiction. Therefore, $\alpha$ is a hyperbolic closed geodesic.\newline

On the other hand, if pencil $P(x_{0},U)$ is not an immersion of an infinite cone, then $P(x_{0},U)$ never the less contains the image of immersion $\phi$ of a sector $\mathcal{S}=\left\lbrace re^{i\theta} \vert ~ 0 \leq r < 1, \theta \in U \right\rbrace$ that does not extend to $\left\lbrace re^{i\theta} \vert ~ 0 \leq r \leq 1, \theta \in U \right\rbrace$. Without loss of generality, we can assume that $T$ is the image of a ray $\phi(re^{i\theta})$ such that $\phi$ does not extend to $e^{i\theta}$. Just like previously, we can consider a regular point $y$ which is an accumulation point of $T$ and a small disk $D$ centered on $y$. Following the same idea, we obtain a segment in $\mathcal{S}$ whose image is a hyperbolic closed geodesic with a direction belonging to $U \cup -U$.\newline
\end{proof}

The only obstruction for dilation surfaces to be able to decompose into flat triangles whose sides are saddle connections and vertices are singularities appears in the case of cylinders of angle at least $\pi$.

\begin{thm}\label{thm:Veech}(Veech's criterion)
For a dilation surface $X$, the three following propositions are equivalent:
\begin{itemize}
\item $X$ has a geodesic triangulation;
\item $X$ does not contain a hyperbolic cylinder of angle at least $\pi$;
\item every affine immersion of the open unit disk $\mathbb{D} \subset \mathbb{C}$ in $X$ extends continuously to its closure $\bar{\mathbb{D}}$.   
\end{itemize}
\end{thm}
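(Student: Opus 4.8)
The plan is to prove the equivalence by the cycle (1) $\Rightarrow$ (2) $\Rightarrow$ (3) $\Rightarrow$ (1). Two of these are ``an obstruction exists'' directions and are comparatively soft; the whole difficulty is concentrated in the single direction asserting that the \emph{absence} of a large cylinder forces good global structure. For (1) $\Rightarrow$ (2) I argue contrapositively. A hyperbolic cylinder $C$ of angle at least $\pi$ develops to an angular sector $\{re^{i\theta}: r>0,\ \theta\in(\theta_{1},\theta_{2})\}$ of width $\theta_{2}-\theta_{1}\ge\pi$, with inner and outer arcs glued by a homothety; its core closed geodesic $\delta$ is essential in $X\setminus\Sigma$ since its holonomy is nontrivial. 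In a geodesic triangulation $\delta$ avoids all vertices and cannot lie in a single simply connected triangle, so it crosses an edge $e$. As the interior of $C$ carries no singularity, the component of $e\cap C$ meeting $\delta$ is a straight chord of the sector with endpoints on the bounding rays; crossing $\delta$ puts these endpoints on the two \emph{distinct} rays $\theta=\theta_{1}$ and $\theta=\theta_{2}$. But a sector of angle at least $\pi$ is non-convex (a half-plane when the angle equals $\pi$) and contains no segment joining its two boundary rays, a contradiction.

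The core of the proof is (2) $\Rightarrow$ (3), again by contraposition: from an affine immersion $\phi:\mathbb{D}\to X$ failing to extend continuously at $p=e^{i\theta_{0}}$ I must exhibit a hyperbolic cylinder of angle at least $\pi$. After replacing $\mathbb{D}$ by a small disk internally tangent to $\partial\mathbb{D}$ at $p$, the radius aimed at $p$ is a trajectory that accumulates without converging, for otherwise $\phi$ would extend to a regular point or a singularity. Applying Lemma~\ref{lem:immercone} to a suitable sub-pencil of directions near $\theta_{0}$ --- none of whose trajectories terminate --- and then Lemma~\ref{lem:hypcyl}, this trajectory accumulates on a hyperbolic closed geodesic, so it lies in a hyperbolic cylinder $C$; shrinking the tangent disk, its entire image lies in $C$. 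Lifting $\phi$ through the developing map of $C$ turns its closed image into a round Euclidean disk whose boundary meets the apex of the developing sector, the unique point where that developing map fails to extend. The decisive elementary fact is that a round disk can touch the apex of a wedge only if the wedge angle is at least $\pi$; as this angle is precisely the angle of $C$, we obtain the desired large cylinder.

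For (3) $\Rightarrow$ (1) I would carry out the usual exhaustion. Choose a maximal family $\mathcal{L}$ of pairwise interior-disjoint saddle connections and cut $X$ along it. Each complementary piece has geodesic boundary and no interior singularity; by the extension hypothesis it develops to a metrically complete Euclidean region. No piece is an annulus: a flat or small-angle cylinder would admit a crossing saddle connection disjoint from $\mathcal{L}$, contradicting maximality, while a cylinder of angle at least $\pi$ is excluded because it would itself furnish a non-extending immersed disk, exactly as in the construction for (2) $\Rightarrow$ (3). Hence every piece is a polygon, and if any polygon had more than three sides one of its diagonals would be a saddle connection disjoint from $\mathcal{L}$, again contradicting maximality. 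Therefore all pieces are triangles and $\mathcal{L}$ is a geodesic triangulation.

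The step I expect to be hardest is (2) $\Rightarrow$ (3). Lemma~\ref{lem:immercone} already delivers a limiting hyperbolic closed geodesic from the non-extending immersion, so the genuinely new content is the \emph{quantitative} conclusion that the angle of its cylinder is at least $\pi$. Making this rigorous requires confirming that the relevant part of $\phi$ develops into the sector of one and the same cylinder with its closure reaching the apex, and then applying the convexity estimate for a disk tangent to a wedge; this is exactly where the threshold $\pi$, and not any smaller angle, is forced.
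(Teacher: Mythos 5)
First, a point of comparison: the paper does not actually prove this statement. It is quoted as Veech's criterion, with the proof attributed to Veech's unpublished notes \cite{V} and to the appendix of \cite{DFG}, so there is no in-paper argument to measure yours against and your attempt must stand on its own. Your skeleton is the right one and matches the standard proof: the cycle $(1)\Rightarrow(2)\Rightarrow(3)\Rightarrow(1)$, the observation that a sector of angle at least $\pi$ admits no straight chord joining its two boundary rays (which does give $(1)\Rightarrow(2)$ correctly), and the elementary fact that a round disk contained in an open wedge can touch the apex only if the wedge angle is at least $\pi$.

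The direction $(2)\Rightarrow(3)$, which you rightly identify as the crux, nevertheless has genuine gaps. First, Lemma~\ref{lem:immercone} cannot be invoked as you do: its hypothesis is that \emph{no} trajectory of the pencil --- meaning the maximal trajectories of $X$, not their portions inside the immersed disk --- hits a singularity, and you give no reason why an open interval of such directions near $\theta_{0}$ should exist (the set of directions from $\phi(0)$ whose trajectories hit singularities can be dense, so ``a suitable sub-pencil none of whose trajectories terminate'' is not guaranteed). Relatedly, the proof of that lemma uses the same hypothesis a second time to exclude the case where the closed geodesic it produces lies in a \emph{flat} cylinder; in your setting that exclusion is unavailable, and a flat cylinder has no apex, so the remainder of your argument would not apply to it. Second, the assertion ``shrinking the tangent disk, its entire image lies in $C$'' is precisely the hard point and is stated rather than proved: because of the dilation holonomy, the images of shrinking Euclidean tangent disks at $p$ need not shrink in $X$, and a priori every such image could cross $\partial C$. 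One must show that if the developed disk leaves the developed sector then $\phi$ in fact extends at $p$, and also that the boundary point of the developed disk corresponding to $p$ is the apex rather than a point of $\partial S\setminus\{0\}$ (in which case $\phi$ would again extend, to a boundary point of $C$). These verifications are the substance of the argument in \cite{DFG}; without them your $(2)\Rightarrow(3)$ is an outline, not a proof. The direction $(3)\Rightarrow(1)$ is likewise only a sketch --- you need a Gauss--Bonnet argument to see that the complementary pieces of a maximal family of disjoint saddle connections are disks or cylinders, and property $(3)$ itself to rule out interior singularities of the pieces --- but those steps are standard.
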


The initial proof of the latter result is contained in unpublished notes (see \cite{V}). A proof of equivalence between the three propositions of Theorem \ref{thm:Veech} is given in the appendix of \cite{DFG}. The result clearly extends to dilation surfaces with boundary.\newline

We deduce from Theorem \ref{thm:Veech} an important technical result that we will use in the next Section to prove the main results of the paper.

\begin{lem}\label{lem:triangle}
Let us consider a dilation surface $X$ with no hyperbolic cylinder of angle at least $\pi$, with a singularity $A$ and a boundary saddle connection $B$. Let us also assume that there is a trajectory $\gamma$ from $A$ to a regular point $b$ of saddle connection $B$. Let $\theta$ be the (oriented) direction of $\gamma$. Then there is an embedded flat triangle $AMN$ in $X$ that satisfies the following conditions (see Figure~4):
\begin{itemize}
\item $AMN$ is bounded by three saddle connections $[AM]$, $[AN]$ and $[MN]$, where $M,N$ are singularities of $X$;
\item denoting by $]\theta_{0},\theta_{1}[$ the open angular sector of vertex $A$ inside $AMN$, trajectory $\gamma$ (with the opposite orientation) belongs to pencil $P(A,]\theta_{0},\theta_{1}[)$;
\item every trajectory in pencil $P(A,]\theta_{0},\theta_{1}[)$ crosses boundary saddle connection $B$ at some regular point.
\end{itemize}
\end{lem}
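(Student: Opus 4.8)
The plan is to realize $AMN$ as the image of an immersed planar triangle with apex at $A$, built from the fan of separatrices issued from $A$ around the direction $\theta$ of $\gamma$, and to read off the three required properties from the way this fan develops into $\mathbb{C}$. Call a direction $\psi$ \emph{admissible} if the separatrix issued from $A$ in direction $\psi$ reaches a regular point of $B$ without first meeting a singularity; by hypothesis $\theta$ is admissible, with crossing point $b$. The first step is to check that the set of admissible directions is open: if a separatrix meets $B$ at a regular point, then all sufficiently small perturbations of its direction still meet $B$ at nearby regular points, since the developing map varies continuously along the compact segment joining $A$ to $B$. Let $]\theta_0,\theta_1[$ be the connected component of the admissible set containing $\theta$; it is an open interval with $\theta\in\,]\theta_0,\theta_1[$, and by construction every direction of this interval is admissible.

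Because $X$ has no hyperbolic cylinder of angle at least $\pi$, Theorem~\ref{thm:Veech} applies: the affine immersion $\phi$ carrying the planar cone $\{re^{i\psi}:r\ge 0,\ \psi\in\,]\theta_0,\theta_1[\}$ onto the fan of separatrices extends continuously to the closed cone. Since every admissible trajectory crosses the compact segment $B$ at bounded developed distance, the extreme separatrices, in directions $\theta_0$ and $\theta_1$, are genuine trajectories of $X$ of finite length lying in a bounded developed region. Neither is admissible, for otherwise the interval could be enlarged; and a separatrix issued from $A$ that is not admissible, along which $\phi$ extends, and which stays in a bounded region must terminate at a singularity — either an interior singularity it reaches before $B$, or an endpoint of $B$. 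Call these singularities $M$ (for $\theta_0$) and $N$ (for $\theta_1$); then $[AM]$ and $[AN]$ are saddle connections. Developing $M$ and $N$ to the boundary rays of the cone and taking the straight segment between them, its $\phi$-image is a geodesic joining the two singularities $M$ and $N$, hence a saddle connection $[MN]$, which delimits a flat triangle $AMN$ whose angular sector at $A$ is $]\theta_0,\theta_1[$.

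It remains to prove that $AMN$ is flat and embedded. Flatness is immediate: a singularity in the interior of the triangle would lie on the separatrix issued from $A$ in some direction $\psi\in\,]\theta_0,\theta_1[$, and that separatrix would then meet a singularity before reaching $B$, contradicting the admissibility of $\psi$; so the open triangle contains no singularity and $\phi$ develops it isometrically onto a Euclidean triangle. The main obstacle is embeddedness, that is, the injectivity of $\phi$ on the open triangle. Here I would argue as in the proof of Lemma~\ref{lem:immercone}: if $\phi$ identified two distinct points of the cone, one would obtain either a self-intersecting geodesic — impossible, since geodesics of a dilation surface have no self-intersection — or a closed geodesic formed by a segment of the cone. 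The latter is excluded precisely because every trajectory of $P(A,]\theta_0,\theta_1[)$ crosses $B$ at a regular point and thereby exits the surface, and a trajectory leaving through $\partial X$ cannot close up inside the cone. Hence $\phi$ is an embedding and $AMN$ is an embedded flat triangle, which also confirms that $[MN]$ is an embedded saddle connection. The three asserted properties now follow at once: the three sides are the saddle connections constructed above; $\gamma$ (with the opposite orientation) belongs to $P(A,]\theta_0,\theta_1[)$ because its direction is admissible; and every trajectory of this pencil crosses $B$ at a regular point by the very definition of admissibility of the directions in $]\theta_0,\theta_1[$.
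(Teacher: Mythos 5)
Your proof follows essentially the same route as the paper's: take the maximal interval of directions of trajectories from $A$ reaching a regular point of $B$, use Veech's criterion (Theorem~\ref{thm:Veech}) to extend the developing immersion of the resulting fan to its boundary, and locate $M$ and $N$ as the first singularities on the two extreme separatrices. The only cosmetic difference is in the embeddedness step, where the paper rules out identifications via the absence of digons (two pencil trajectories from $A$ meeting again) while you phrase it as excluding a closed geodesic in the image of the cone; both are at the same level of rigor and lead to the same conclusion.
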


\begin{figure}
\includegraphics[scale=0.7]{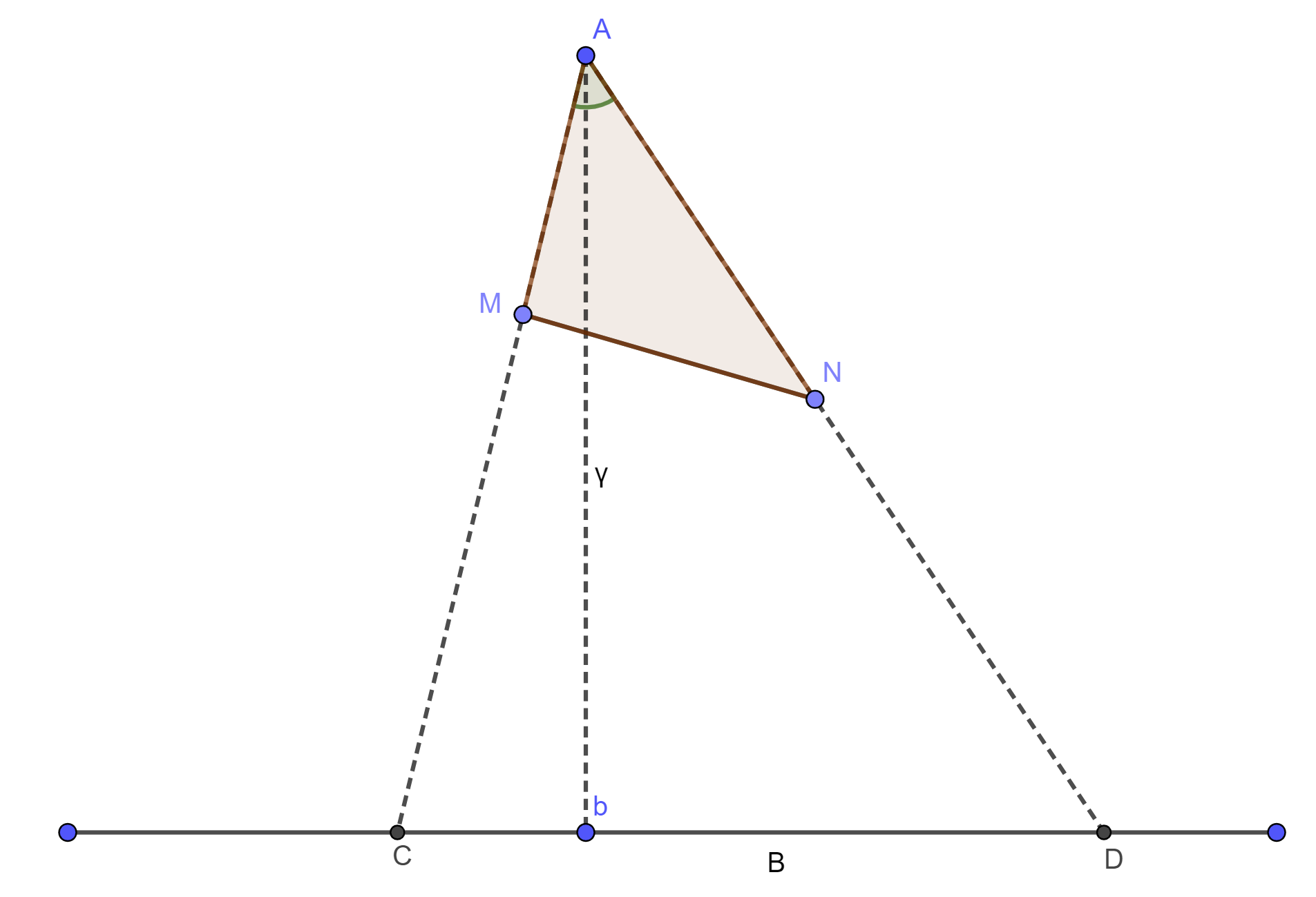}
\caption{Triangle $AMN$ and pencil $P(A,]\theta_{0},\theta_{1}[)$ of trajectories crossing boundary saddle connection $B$.} 
\end{figure}

\begin{proof}
Since trajectory $\gamma$ is compact there is a neighborhood of $\gamma$ that contains no singularity other than $A$. Thus $\gamma$ belongs to a pencil of trajectories starting from $A$ and crossing $B$ in a regular point. We denote by $P(A,]\theta_{0},\theta_{1}[)$ the maximal family of these trajectories. Trajectories starting from $A$ and following directions $\theta_{0},\theta_{1}$ do not hit a regular point of $B$ (otherwise the family could be extended further).\newline
Trajectories of $P(A,]\theta_{0},\theta_{1}[)$ do not intersect each other (there is no digon a dilation surface), do not hit any singularity and thus coincide with the affine embedding of the interior of a flat triangle $\mathcal{T}$ (the directions of the sides of the triangle are $\theta_{0}$, $\theta_{1}$ and the direction of $B$).\newline
Since $X$ contains no hyperbolic cylinder of angle at least $\pi$, Theorem \ref{thm:Veech} implies that the affine immersion of any disk extends continuously to its boundary. It is straightforward that the embedding of $\mathcal{T}$ thus continuously extends (as an immersion) to its boundary. The image of $\mathcal{T}$ is $ACD$ where $C,D \in B$ are the ends of the open interval $]C,D[$ formed by regular points of saddle connection $B$ crosses by some trajectory of pencil $P(A,]\theta_{0},\theta_{1}[)$. The image of the boundary is formed by geodesic segments.\newline
By hypothesis, trajectories starting from $A$ and following directions $\theta_{0},\theta_{1}$ do not cross $B$ at a regular point. Thus, both sides $[AC]$ and $[AD]$ contain a singularity. We denote by $M$ the singularity in side $[AC]$ that is the closest of $A$. We define $N$ similarly in $[AD]$ (it may happen that $M,N$ coincide with $C,D$ and are the ends of saddle connection $B$). Triangle $AMN$ satisfies all the required properties (see Figure~4).\newline
\end{proof}

\section{Exceptional pencils and dilation surfaces with boundary}\label{sec:EPD}

\begin{defn}
In a dilation surface $X$, a pencil $P(x_{0},U)$ with $x_{0} \in X$ and $U \subset \mathbb{S}^{1}$ is \textit{exceptional} if none of its trajectories cross the boundary or accumulate on a hyperbolic closed geodesic.
\end{defn}

The key result in this section is that exceptional pencils do not exist in dilation surfaces with boundary.

\begin{prop}\label{prop:noEP}
In any dilation surface with nonempty boundary, there can be no exceptional pencil.
\end{prop}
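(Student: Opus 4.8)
The plan is to argue by contradiction: suppose a dilation surface $X$ with nonempty boundary carries an exceptional pencil $P(x_0,U)$. Since any sub-pencil $P(x_0,U')$ with $U'\subset U$ is again exceptional and since flowing $x_0$ a little along one of its trajectories does not affect the hypotheses, I may assume $U$ as small as I like and $x_0$ regular. The conceptual picture I would follow is that a trajectory which never crosses $\partial X$ is trapped, and there are only two ways to be trapped: accumulate on a hyperbolic closed geodesic, or stay minimal/periodic inside a flat region. The first is forbidden outright by exceptionality; the essential point of the statement is that the nonempty boundary destroys the second, since a trajectory not caught by a hyperbolic cylinder must eventually reach $\partial X$. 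The engine producing hyperbolic cylinders will be Lemma~\ref{lem:immercone}, and the difficulty is to upgrade its \emph{existence} conclusion into the statement that \emph{some trajectory of the pencil} accumulates on the geodesic it produces.

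The core step treats a recurrent trajectory $T\subset P(x_0,U)$, in direction $\theta_0$, that avoids singularities (it avoids $\partial X$ automatically). Re-reading the construction of Lemma~\ref{lem:immercone}, the closed geodesic $\alpha$ it extracts has direction $\delta\in\overline U$ and actually sits inside the immersed cone $\phi(\mathcal C)=P(x_0,U)$, passing through a point $x_s$ of $T$; being a closed geodesic it lies in a cylinder $C_\alpha$. If $C_\alpha$ is hyperbolic, then $x_s$ lies in its interior and in the open set $\phi(\mathcal C)$, so a direction-$\delta$ ray of the pencil through a nearby point enters $C_\alpha$, and since $C_\alpha$ contains the direction-$\delta$ geodesic $\alpha$, Lemma~\ref{lem:hypcyl} shows that ray accumulates on a hyperbolic closed geodesic, contradicting exceptionality. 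Because $P(\,\cdot\,,U)$ is the immersion of a convex cone, the ray in question can be kept inside $P(x_0,U)$ itself. The delicate bookkeeping here --- and what I expect to be the main obstacle --- is the direction-matching: I must arrange that the trajectory which actually enters $C_\alpha$ has a direction in the angular sector swept by $C_\alpha$ (equivalently, that $\delta$ is realized by a genuine ray of the pencil lying in that sector), so that Lemma~\ref{lem:hypcyl} applies rather than the trajectory merely crossing $C_\alpha$ transversally.

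The remaining configurations --- where $C_\alpha$ is flat, or where trajectories run into singularities (which exceptionality does \emph{not} forbid) --- are exactly the ones in which the boundary must be used, and this is where Lemma~\ref{lem:triangle} enters. Since for a fixed basepoint only countably many directions give a trajectory meeting a singularity, recurrent singularity-avoiding trajectories exist for a dense set of directions, which feeds the previous paragraph; the genuinely leftover cases I would dispose of through the Veech dichotomy of Theorem~\ref{thm:Veech}. When $X$ has no hyperbolic cylinder of angle at least $\pi$, immersions of disks extend to their boundary and Lemma~\ref{lem:triangle} is available: a trajectory approaching a singularity $A$ in a direction whose outgoing separatrix reaches a regular boundary point yields an embedded triangle $AMN$ together with a pencil $P(A,\,]\theta_0,\theta_1[)$ every trajectory of which crosses a boundary saddle connection; the trajectories of $P(x_0,U)$ with direction slightly perturbed from the one hitting $A$ then slip into this triangle and cross $\partial X$, contradicting exceptionality. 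Proposition~\ref{prop:separatrix} is the device that controls whether the relevant separatrices actually reach the boundary, and is what lets this case analysis close.

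Finally, if $X$ does contain a hyperbolic cylinder $C$ of angle at least $\pi$, its closed geodesics sweep an arc of length $\geq\pi$, so for every direction $d$ the cylinder $C$ carries a closed geodesic of direction $d$ or $-d$; any pencil trajectory entering $C$ therefore accumulates on a hyperbolic closed geodesic by Lemma~\ref{lem:hypcyl}, forcing the pencil to avoid $C$ entirely, and I would feed this extra constraint back into the recurrence argument of the second paragraph. Thus every branch collapses either to a pencil trajectory accumulating on a hyperbolic cylinder or to one crossing $\partial X$. I expect the hard part to be uniformly the direction-matching of the second paragraph --- guaranteeing that the hyperbolic geodesic extracted from recurrence is approached \emph{along the pencil} --- while the flat-cylinder, singularity, and large-cylinder situations amount to routing the untrapped trajectories to the boundary via Lemma~\ref{lem:triangle} and Proposition~\ref{prop:separatrix}.
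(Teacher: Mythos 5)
Your sketch correctly isolates the two difficulties (trajectories of the pencil may hit singularities, and the existence conclusion of Lemma~\ref{lem:immercone} must somehow be turned into a contradiction with exceptionality), but it does not resolve either of them, and the device the paper uses to do so --- the reduction to the class $(\Delta)$ and the induction on the minimal number of triangles $(\Delta_{min})$ --- is entirely absent from your argument. Concretely: in your singularity case you assume that the trajectory of $P(x_0,U)$ hitting a singularity $A$ has an ``outgoing separatrix reaching a regular boundary point,'' which is exactly what there is no a priori reason to believe (that separatrix could hit another singularity, or be recurrent); Proposition~\ref{prop:separatrix} cannot supply this, since it is a conditional statement whose hypothesis is precisely that all separatrices reach the boundary. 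The paper closes this case differently: it first reduces to a minimal surface in $(\Delta_{min})$ and shows that a trajectory from a \emph{boundary} point hitting a singularity would, via Lemma~\ref{lem:triangle}, produce an embedded triangle $AMN$ disjoint from the exceptional pencil, whose removal yields a smaller surface still in $(\Delta)$ --- contradicting minimality. Only after singularity-hitting is excluded does Lemma~\ref{lem:immercone} become applicable (its proof genuinely needs that \emph{no} trajectory of the pencil hits a singularity, e.g.\ in its flat-cylinder subcase; a single recurrent singularity-avoiding trajectory is not enough, since the pencil then need not be an immersed cone or sector). The final contradiction is then a ping-pong between two boundary saddle connections $B$ and $B'$, not a recurrence argument.

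Your ``direction-matching'' obstacle --- arranging that a trajectory \emph{of the pencil} actually accumulates on the hyperbolic geodesic extracted from recurrence --- is likewise flagged but not solved, and the paper sidesteps it entirely: by Lemma~\ref{lem:finitegeo} one can remove the finitely many hyperbolic cylinders whose closed geodesics point in directions of $U\cup -U$ (after shrinking $U$), so that the surface in class $(\Delta)$ has \emph{no} hyperbolic closed geodesic in those directions at all, and the bare existence statement of Lemma~\ref{lem:immercone} is already a contradiction. Without the reduction to $(\Delta)$ and the minimality induction, your case analysis does not close, so the proposal has a genuine gap rather than being an alternative proof.
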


In Subsection~\ref{sec:Delta}, we introduce a specific class $(\Delta)$ of dilation surfaces with boundary to prove Proposition~\ref{prop:noEP}. In Subsection~\ref{sec:Existence} we deduce from Proposition~\ref{prop:noEP} the existence of hyperbolic cylinders in dilation surfaces with horizon saddle connections.

\subsection{Class $(\Delta)$ and exceptional pencils}\label{sec:Delta}

We introduce a class $(\Delta)$ of dilation surfaces with boundary such that any counterexample to Proposition \ref{prop:noEP} would lead to the existence of a counter-example inside class $(\Delta)$. We will then prove that class $(\Delta)$ is empty and infer that no dilation surface with nonempty boundary contains an exceptional pencil.

\begin{defn}
A dilation surface $X$ with nonempty boundary belongs to class $(\Delta)$ if it satisfies the following properties:
\begin{itemize}
\item $X$ contains an exceptional pencil $P(x_{0},U)$ with $x_{0} \in X$ and $U \subset \mathbb{S}^{1}$;
\item $x_{0}$ is a singularity of $X$;
\item there is no hyperbolic closed geodesic in any direction of $U \cup -U$;
\item $X$ has a geodesic triangulation (see Theorem \ref{thm:Veech}).
\end{itemize}
\end{defn}

\begin{prop}\label{prop:Delta}
If a dilation surface $X$ with nonempty boundary has an exceptional pencil, then class $(\Delta)$ is nonempty.
\end{prop}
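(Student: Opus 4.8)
The plan is to start from the given exceptional pencil $P(x_{0},U)$ on the boundary surface $X$ and to perform a sequence of reductions, each preserving the property of carrying an exceptional pencil on a surface with nonempty boundary, until all four defining properties of class $(\Delta)$ hold. Two elementary moves are available throughout. First, I may replace $U$ by any open subinterval, since a sub-pencil of an exceptional pencil is again exceptional. Second, I may cut $X$ along a chain of saddle connections and discard a piece that the pencil never meets: the surviving pencil then crosses neither the old boundary (by exceptionality) nor the newly created one (since its trajectories stay in the kept component, the new boundary being where the discarded piece was attached). I will install the four properties one at a time using only these two moves, so that condition~(1) is maintained automatically.

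The requirement that $x_{0}$ be a singularity is the cheapest to meet: I simply add $x_{0}$ to the singularity set as a marked point. Since a marked point is flat of angle $2\pi$, the trajectories of $P(x_{0},U)$ are unchanged, the pencil stays exceptional, and now $x_{0}\in\Sigma$. To force a geodesic triangulation I eliminate every hyperbolic cylinder of angle at least $\pi$. If $C$ is such a cylinder, its closed geodesics point in an arc $A$ of length at least $\pi$, so $A\cup -A$ is all of $\mathbb{S}^{1}$ save at most its two endpoint directions; for every other direction $d$ a pencil trajectory entering $C$ would accumulate by Lemma~\ref{lem:hypcyl}, contradicting exceptionality, so the pencil avoids $C$ except possibly in those two directions (and in particular $x_{0}\notin C$). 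Fixing an auxiliary direction $d_{0}$, each such cylinder contains a closed geodesic pointing in $d_{0}$ or $-d_{0}$, and distinct cylinders give distinct such geodesics, so Lemma~\ref{lem:finitegeo} leaves only finitely many of them. I then shrink $U$ to avoid the finitely many forbidden directions, cut all these cylinders out at once, keep the component containing $x_{0}$, and invoke Theorem~\ref{thm:Veech} to conclude that the reduced surface has a geodesic triangulation.

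It remains to arrange that no hyperbolic closed geodesic points in $U\cup -U$, and this is where the real difficulty lies. For a cylinder $C$ whose angular range $A$ satisfies $U\subseteq A\cup -A$, the same accumulation argument shows the pencil avoids $C$, so I may cut it out; the trouble is a cylinder whose range is short and meets $U\cup -U$ without containing it, for then the pencil genuinely passes through $C$ in those directions of $U$ lying outside $A\cup -A$, and cutting would drive a pencil trajectory across the new boundary. \textbf{The main obstacle is precisely that the directions carrying a hyperbolic closed geodesic may be dense in $U$}, exactly the density anticipated by Conjecture~\ref{conj:two}, so one can clear $U\cup -U$ neither by shrinking alone (the bad set has empty interior but may be dense) nor by removing cylinders one by one (the pencil obstructs the short ones, of which infinitely many may intrude). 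I expect the resolution to interleave the two moves within a minimal-counterexample induction: among boundary surfaces carrying an exceptional pencil, work with one of minimal topological complexity. Any genuine cut along a cylinder strictly lowers that complexity while producing a smaller surface still carrying an exceptional pencil, so a minimal surface admits no cylinder of range containing $U\cup -U$; for such a surface I then shrink $U$ to discard the residual intruding cylinders, the point being that once the cuttable cylinders are gone the remaining hyperbolic directions in $U\cup -U$ can be avoided by a further shrink. Making this final control rigorous is the step I expect to demand the most care, and it is the heart of the reduction to class~$(\Delta)$.
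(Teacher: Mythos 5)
Your reduction correctly installs two of the four defining properties of $(\Delta)$ (marking $x_{0}$, and showing that the pencil must avoid every hyperbolic cylinder of angle at least $\pi$ so that Theorem~\ref{thm:Veech} applies), and you have correctly located the crux: clearing $U\cup -U$ of directions of hyperbolic closed geodesics when those directions may be dense. But you leave exactly that step unproved --- your closing sentence concedes as much --- and the ``minimal topological complexity'' induction you sketch does not close it: its final move (``once the cuttable cylinders are gone the remaining hyperbolic directions in $U\cup -U$ can be avoided by a further shrink'') is precisely the assertion in need of proof, since no open subinterval avoids a dense set of bad directions. As it stands the proposal therefore has a genuine gap at the heart of the statement.

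The paper's device for the ``short cylinder'' you worry about is the idea your proposal is missing, and it dissolves the obstruction you describe. Given a cylinder whose direction range $A$ merely meets $U\cup -U$ without containing it, one does not attempt to remove the whole cylinder: one first shrinks the pencil to $P(x_{0},V)$ for an open interval $V\subset U$ with $V$ or $-V$ contained in $A$, and then excises only the \emph{portion} $C$ of the cylinder whose closed geodesics point exactly in the directions of $V$ or $-V$. Every trajectory of $P(x_{0},V)$ now travels in a direction realized, up to sign, by a closed geodesic of $C$, so by Lemma~\ref{lem:hypcyl} it would accumulate inside $C$ if it entered it; exceptionality forces $P(x_{0},V)$ to be disjoint from $C$ (and $x_{0}\notin C$), so $C$ can be removed without the pencil ever meeting the new boundary. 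Thus the pencil does not ``obstruct'' short cylinders at all --- after the shrink it cannot enter the relevant portion. Termination of the process is then drawn from Lemma~\ref{lem:finitegeo} (finitely many hyperbolic closed geodesics in any fixed direction, hence finitely many cylinders to excise), after which the absence of cylinders of angle at least $\pi$, and hence the geodesic triangulation, comes for free from the absence of hyperbolic directions in $W\cup -W$ rather than by the separate argument you give. You would need to supply this restrict-then-excise mechanism, together with the finiteness count, to turn your outline into a proof.
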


\begin{proof}
Let $P(x_{0},U)$ be an exceptional pencil in $X$. We assume that there is a hyperbolic cylinder $C$ and an open interval $V \subset U$ such that the directions of closed geodesics of $C$ coincide with $V$ or $-V$ ($C$ may be only a portion of a bigger cylinder covering a bigger interval of directions). Since $P(x_{0},U)$ is an exceptional pencil, $P(x_{0},V)$ is also an exceptional pencil.\newline
If $x_{0} \in C$, then $x_{0}$ belongs to some hyperbolic closed geodesic $\alpha$ of $C$. The direction of $\alpha$ belongs to $V$ or $-V$ so $\alpha$ coincides with some trajectory in pencil $P(x_{0},V)$. This implies that $P(x_{0},V)$ is not an exceptional pencil.\newline
If $x_{0} \notin C$ and a trajectory $T$ of $P(x_{0},V)$ enters $C$, then Lemma \ref{lem:hypcyl} implies that $T$ accumulates on some hyperbolic closed geodesic of $C$. This would also contradict the assumption that $P(x_{0},V)$ is an exceptional pencil. Consequently, in any case, $P(x_{0},V)$ is disjoint from cylinder $C$.\newline
We can remove $C$ from $X$ and obtain a new dilation surface that still contains an exceptional pencil. If this leads to the existence of boundary components without any singular points, we can simply mark points on the boundary. Since there are at most finitely many hyperbolic closed geodesics in any given direction (see Lemma~\ref{lem:finitegeo}), after finitely many modifications, we obtain a dilation surface $X_{1}$ with nonempty boundary and an exceptional pencil $P(x_{0},W)$ such that no hyperbolic closed geodesic of $X_{1}$ belongs to a direction of $W$ or $-W$.\newline
Dilation surface $X_{1}$ could contain a hyperbolic cylinder of angle greater than or equal to $\pi$ because such a cylinder would contain a closed geodesic in a direction of $W$ or $-W$. Applying Veech's criterion (Theorem~\ref{thm:Veech}) then implies that $X_{1}$ decomposes into finitely many flat triangles whose sides are saddle connections. Lastly, if $x_{0}$ is not already a singularity of $X_{1}$, we can still make it a marked point. Therefore $X_{1}$ belongs to class $(\Delta)$.\newline
\end{proof}

For a dilation surface of class $(\Delta)$, the number of triangles of any geodesic triangulation is a topological invariant. We define $(\Delta_{min})$ as the subclass of $(\Delta)$ formed by dilation surfaces with the smallest possible number of triangles. We prove that surfaces of $(\Delta_{min})$ have very constrained dynamical properties.

\begin{lem}\label{lem:DeltaMin}
Let us consider a dilation surface $X$ of $(\Delta_{min})$ with a singularity $x_{0} \in X$, an exceptional pencil $P(x_{0},U)$ and no hyperbolic closed geodesic whose directions belongs to $U \cup -U$. We also require that no direction of a boundary saddle connection of $\partial X$ belongs to $U \cup -U$.

Then for any boundary saddle connection $B$, there is a boundary saddle connection $B'$ such that for any regular point $b \in B$, every trajectory of pencil $P(b,U)$ or $P(b,-U)$ (one of them is nonempty) crosses $B'$ at some regular point.
\end{lem}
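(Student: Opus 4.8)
The plan is to track the trajectories that enter $X$ through $B$ and to use the minimality of the triangulation to force them to leave through a single boundary saddle connection. First I would fix orientations. Since no direction of a boundary saddle connection lies in $U \cup -U$, every direction of $U$ is transverse to $B$, and because $U$ is an interval avoiding the direction of $B$ (and its opposite), the whole of $U$ points to one and the same side of $B$. Replacing $U$ by $-U$ if necessary, I may assume that $P(b,U)$ points into the interior of $X$ for every regular $b \in B$; this is exactly the assertion that one of $P(b,U)$, $P(b,-U)$ is nonempty. The goal then reads: every trajectory issued from a regular point of $B$ in a direction of $U$ crosses a boundary saddle connection at a regular point, and that saddle connection is the same $B'$ for all such trajectories.

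Second, I would rule out trajectories that stay in the interior forever. Let $G$ be the set of pairs $(b,\theta)$, with $b \in B$ regular and $\theta \in U$, whose forward trajectory crosses a boundary saddle connection at a regular point before meeting a singularity. Transversality makes such crossings stable, so $G$ is open, and on each connected component of $G$ the boundary saddle connection first reached is constant (two nearby trajectories cross the same transverse segment). It remains to see that a trajectory which never crosses the boundary must meet a singularity: otherwise a thin subpencil $P(b,U')$ around it would consist of trajectories none of which hits a singularity or crosses the boundary, and Lemma~\ref{lem:immercone} would produce a hyperbolic closed geodesic in a direction of $U' \cup -U' \subset U \cup -U$, contradicting the hypothesis. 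Hence every trajectory of the pencil either lands in $G$ or hits a singularity without having first crossed the boundary.

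Third --- and this is where minimality enters --- I would show that the second alternative never occurs and that $G$ fills the whole parameter space with a constant value of $B'$. A trajectory realizing the bad alternative, read backwards, is a separatrix from a singularity $A$ reaching a regular point of $B$ in a direction of $-U$; such separatrices are also what separate the components of $G$ on which different boundary saddle connections are reached. Applying Lemma~\ref{lem:triangle} to $A$ and $B$ (legitimate because $X$, lying in $(\Delta)$, has a geodesic triangulation and hence no hyperbolic cylinder of angle at least $\pi$ by Theorem~\ref{thm:Veech}) yields an embedded flat triangle $AMN$ carrying a full pencil of trajectories that cross $B$ transversally at regular points. Cutting $X$ along the saddle connections bounding this configuration excises a triangulated piece and produces a dilation surface $X'$ with strictly fewer triangles. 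I would then check that $X'$ still lies in $(\Delta)$: it has nonempty boundary (the cut creates new boundary saddle connections, along which we mark points if necessary), it still admits a geodesic triangulation, removing a subsurface creates no new closed geodesic so there is still none hyperbolic in $U \cup -U$, and the exceptional pencil $P(x_0,U)$ survives because its trajectories never cross the boundary and therefore avoid the excised region, all of whose trajectories in the relevant directions run into $B$. This contradicts $X \in (\Delta_{min})$. Consequently no such separating separatrix exists, the parameter space minus finitely many singular directions is connected, $G$ is everything, and the locally constant boundary saddle connection is globally a single $B'$.

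The main obstacle is the surgery in the third paragraph: making precise which saddle connections to cut along so that the excised piece is genuinely triangulated and its removal strictly lowers the triangle count, and above all verifying that the exceptional pencil $P(x_0,U)$ remains intact and that no hyperbolic closed geodesic in $U \cup -U$ is created. Controlling the interaction between the exceptional pencil (directions in $U$) and the triangle $AMN$ (whose pencil runs in directions near $-U$) is the delicate point, since a priori a trajectory of $P(x_0,U)$ could traverse the region to be removed; ensuring this disjointness, and deducing the global constancy of $B'$ from the mere local constancy, is precisely what the minimality hypothesis is there to supply.
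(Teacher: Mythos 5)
Your overall strategy is the paper's: use Lemma~\ref{lem:triangle} to turn a trajectory of $P(b,\pm U)$ hitting a singularity into an embedded flat triangle, excise it to contradict membership in $(\Delta_{min})$, then invoke Lemma~\ref{lem:immercone} and a continuity argument to get a common boundary saddle connection $B'$. But the step you yourself flag as ``the main obstacle'' --- the disjointness of the exceptional pencil $P(x_{0},U)$ from the triangle $AMN$ --- is a genuine gap, and your suggestion that the minimality hypothesis supplies it is wrong: minimality is only used afterwards, to derive the contradiction once the triangle has been removed. The disjointness has its own two-case proof depending on the sign $\epsilon$ for which $P(b,\epsilon U)$ is the nonempty pencil. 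If $\epsilon=-1$, one picks $V\subset U$ with $V$ contained in the triangle's angular sector $]\theta_{0},\theta_{1}[$; any trajectory in a direction of $V$ that meets $AMN$ must then exit through the boundary saddle connection $B$, which a trajectory of an exceptional pencil cannot do. If $\epsilon=+1$, then $-V\subset\,]\theta_{0},\theta_{1}[$, and a trajectory of $P(x_{0},V)$ meeting $AMN$ would force the singularity $x_{0}$ to lie in the image of $P(A,]\theta_{0},\theta_{1}[)$, which is impossible since that pencil embeds an open triangle whose trajectories hit no singularity. Your ``replacing $U$ by $-U$ if necessary'' erases exactly this case distinction and is not legitimate: the exceptional pencil is $P(x_{0},U)$ for the given $U$, and $P(x_{0},-U)$ need not be exceptional, so the two possible orientations of the pencil at $b$ relative to $U$ are not interchangeable.

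A secondary, fixable issue: in your second paragraph you apply Lemma~\ref{lem:immercone} to a ``thin subpencil'' around a single trajectory that neither crosses the boundary nor hits a singularity, asserting that the whole subpencil inherits this property; nothing guarantees that, since nearby trajectories could a priori hit singularities or cross the boundary. The paper's ordering avoids this: first prove that no trajectory of the full pencil $P(b,\epsilon U)$ hits a singularity (via the triangle-removal contradiction), and only then apply Lemma~\ref{lem:immercone} to the full pencil to conclude that some trajectory crosses the boundary, after which openness together with the absence of singular trajectories gives that all of them cross the same $B'$.
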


\begin{proof}
We consider a regular point $b$ in boundary saddle connection $B$ such that $P(b,\epsilon U)$ is nonempty for some $\epsilon = \pm 1$. We start by proving that no trajectory in pencil $P(b,\epsilon U)$ can hit a singularity.\newline
Otherwise such a trajectory $\gamma$ hits a singularity $A$, and Lemma~\ref{lem:triangle} proves the existence of a flat triangle $AMN$ (where $M,N$ are singularities of $X$) such that:
\begin{itemize}
\item denoting by $]\theta_{0},\theta_{1}[$ the open angular sector of vertex $A$ inside $AMN$, trajectory $\gamma$ (with the opposite orientation) belongs to pencil $P(A,]\theta_{0},\theta_{1}[)$;
\item every trajectory in pencil $P(A,]\theta_{0},\theta_{1}[)$ crosses boundary saddle connection $B$ at some regular point.    
\end{itemize}
Now we consider an open subset $V \subset U$ such that $-\epsilon V \subset ]\theta_{0},\theta_{1}[$. Since $P(x_{0},U)$ is an exceptional pencil, $P(x_{0},V)$ is also an exceptional pencil. We are going to prove that $P(x_{0},V)$ is disjoint from triangle $AMN$.\newline
We start by considering the case where $\epsilon=-1$. Every trajectory traveling in a direction of $]\theta_{0},\theta_{1}[$ (including $V$ in this case) that crosses triangle $AMN$ eventually crosses boundary saddle connection $B$ (see Figure~4). Consequently, if $\epsilon = -1$, pencil $P(x_{0},V)$ is disjoint from triangle $AMN$.\newline
If $\epsilon=1$, assuming that some trajectory in pencil $P(x_{0},V)$ crosses triangle $AMN$, its starting point $x_{0}$ belongs to pencil $P(A,]\theta_{0},\theta_{1}[)$ (see Figure~4). However, we know by hypothesis that no trajectory in pencil $P(A,]\theta_{0},\theta_{1}[)$ can hit a singularity. Thus $P(x_{0},V)$ is also disjoint from triangle $AMN$ in that case.\newline
Now since $P(x_{0},V)$ is disjoint from triangle $AMN$, we can remove $AMN$ from surface $X$ and obtain a surface $X'$ in class $(\Delta)$ that contradicts the fact that $X$ belongs to $(\Delta_{min})$. Indeed, pencil $P(x_{0},V)$ is still an exceptional pencil and $X'$ does not contain any hyperbolic closed geodesic in any direction in $V \cap -V$. Thus, no trajectory in pencil $P(b,\epsilon U)$ can hit a singularity.\newline

Unless some trajectory in pencil $P(b,\epsilon U)$ crosses a boundary saddle connection, Lemma~\ref{lem:immercone} proves the existence of a hyperbolic closed geodesic in some direction of $U \cup -U$. Starting from a trajectory of $P(b,\epsilon . U)$ crossing some boundary saddle connection $B'$, a continuity argument involving Theorem \ref{thm:Veech} and similar to the proof of Lemma \ref{lem:triangle} then proves that every trajectory of the pencil crosses $B'$ (otherwise one trajectory would hit a singularity). A similar statement holds for any starting point in $B$. Since these pencils intersect each other, it is straightforward that the trajectories of these pencils eventually cross the same boundary saddle connection $B'$.
\end{proof}

Now we are finally able to rule out the existence of exceptional pencils in any dilation surface with nonempty boundary.

\begin{proof}[Proof of Proposition \ref{prop:noEP}]
If there exists a dilation surface with nonempty boundary and an exceptional pencil, then Proposition~\ref{prop:Delta} proves that class $(\Delta)$ is nonempty. We will consider a surface of subclass $(\Delta_{min})$ and obtain a contradiction.\newline
Set $X$ a dilation surface in $(\Delta_{min})$ with an exceptional pencil $P(x_{0},U)$ and no hyperbolic closed geodesic that has its direction in $U \cup -U$. Up to taking an open subset of $U$, we can assume that no direction of any boundary saddle connection in $X$ belongs to $U \cup -U$. Then Lemma~\ref{lem:DeltaMin} proves that for every regular point $b$ of a boundary saddle connection $B$, trajectories of the nonempty pencils $P(b,\epsilon . U)$ (for $\epsilon = \pm 1$) cross the same boundary saddle connection $B'$ in a regular point.\newline
Looking at these trajectories with the opposite orientation, we use Lemma~\ref{lem:DeltaMin} once again to prove that for every regular point $b'$ chosen from boundary saddle connection $B'$, trajectories of the nonempty pencils $P(b',-\epsilon . U)$ cross saddle connection $B$. This automatically implies that one trajectory of the pencils hits one endpoint of the opposite boundary saddle connection.\newline
\end{proof}

\subsection{Existence of hyperbolic cylinders}\label{sec:Existence}

The fact that exceptional pencils do not exist directly implies the existence of hyperbolic closed geodesics.

\begin{cor}\label{cor:hyperExist}
In any dilation surface $X$ with nonempty boundary, if there is a pencil $P(x_{0},U)$ such that no trajectory of the pencil crosses the boundary, then there is an open subset $V \subset U$ such that every trajectory of $P(x_{0},V)$ accumulates on some hyperbolic closed geodesic.
\end{cor}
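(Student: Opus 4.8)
The plan is to combine Proposition~\ref{prop:noEP} with Lemma~\ref{lem:hypcyl}. First I would observe that the pencil $P(x_0,U)$ cannot be exceptional: by hypothesis none of its trajectories crosses the boundary, so if in addition none of them accumulated on a hyperbolic closed geodesic, then $P(x_0,U)$ would be an exceptional pencil, contradicting Proposition~\ref{prop:noEP}. Hence there is at least one direction $d\in U$ whose trajectory $T$ accumulates on a hyperbolic closed geodesic $\gamma$. Since $T$ travels in direction $d$ and a trajectory can only accumulate on a closed geodesic of the same or the opposite direction, $\gamma$ has direction $d$ or $-d$, and as recalled in Section~\ref{sec:HC} it belongs to a hyperbolic cylinder $C$ covering an open angular sector $W$ of directions, with $d\in W$ or $-d\in W$.

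Next I would upgrade this single direction to an open interval. Because $T$ accumulates on $\gamma$ it never hits a singularity, and it must enter the interior of $C$ at some finite time, crossing a boundary saddle connection of $C$ at a point $p$; as $T$ avoids the singularities, this $p$ can be taken in the relative interior of that saddle connection. The arc of $T$ from $x_0$ to $p$ is then a compact, singularity-free segment, so by continuity of the directional flow with respect to the direction there is an open interval $V_1\ni d$ such that every trajectory of $P(x_0,V_1)$ follows this initial arc closely and likewise crosses into $C$ near $p$. Shrinking $V_1$ to an open interval $V\subset V_1$ with $V\subset W$ (or $V\subset -W$), I ensure that for every $d'\in V$ the cylinder $C$ contains a hyperbolic closed geodesic in direction $d'$ or $-d'$. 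Lemma~\ref{lem:hypcyl} then applies verbatim to each trajectory of $P(x_0,V)$: having entered $C$, it accumulates on one of the hyperbolic closed geodesics of $C$, which is exactly the desired conclusion.

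The main obstacle is the continuity step. One has to make sure that the entry point $p$ of $T$ into $C$ can genuinely be chosen regular and that the finite initial arc of $T$ survives small perturbations of the direction without hitting a singularity, so that the neighbouring trajectories really do enter $C$ through the same boundary saddle connection. This is precisely the kind of openness argument already exploited for the set $S(r)$ and in the proof of Lemma~\ref{lem:triangle}, and it should go through once the entry is arranged to occur transversally at a regular point of the boundary of $C$.
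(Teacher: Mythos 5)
Your proposal is correct and follows essentially the same route as the paper: invoke Proposition~\ref{prop:noEP} to rule out exceptionality, locate the hyperbolic cylinder containing the limit geodesic, and perturb the direction to get an open interval $V$ on which Lemma~\ref{lem:hypcyl} applies. You merely spell out the continuity/openness step that the paper's very terse proof leaves implicit.
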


\begin{proof}
Such a pencil $P(x_{0},U)$ is not exceptional because this would contradict Proposition~\ref{prop:noEP}. Thus, at least one of its trajectories $\gamma$ accumulates on some hyperbolic closed geodesic. These geodesics belong to a hyperbolic cylinder. Even if the hyperbolic closed geodesic is singular, we can perturb it to one side or the other so that the perturbed trajectory $\gamma'$
accumulates on a closed geodesic in the same cylinder.
\end{proof}

A consequence of Corollary~\ref{cor:hyperExist} is that dilation surfaces with horizon saddle connections satisfy Conjecture~\ref{conj:two}.

\begin{cor}\label{cor:horconj2}
In any (closed) dilation surface $X$ with at least one horizon saddle connection $L$, directions of hyperbolic closed geodesics form a dense subset of $\mathbb{S}^{1}$. 
\end{cor}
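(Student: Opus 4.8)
The plan is to reduce the density statement to a single application, to an arbitrary open interval, of the two tools already assembled: Lemma~\ref{lem:horipencil} (which manufactures, inside the bordered surface $X \setminus L$, a pencil with prescribed directions that never reaches the boundary) and Corollary~\ref{cor:hyperExist} (which converts any boundary-avoiding pencil into a hyperbolic closed geodesic). Concretely, density of the direction set in $\mathbb{S}^{1}$ means exactly that every nonempty open $U \subset \mathbb{S}^{1}$ contains the direction of some hyperbolic closed geodesic, so it suffices to fix such a $U$ and produce one geodesic with direction in $U$.

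First I would cut the closed surface $X$ along the horizon saddle connection $L$. The result $X \setminus L$ is a dilation surface with nonempty boundary (the boundary being the copies of $L$ produced by the cut), so it is a legitimate input for the results of Section~\ref{sec:EPD}. Applying Lemma~\ref{lem:horipencil} to $X$, $L$ and the open set $U$ yields a pencil $P = P(x_{0},\Omega)$ in $X \setminus L$ whose directions lie in $U$ and none of whose trajectories crosses $\partial(X \setminus L)$; in terms of the original surface this says precisely that these trajectories never cross $L$.

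Next I would feed this pencil into Corollary~\ref{cor:hyperExist}. Since $X \setminus L$ has nonempty boundary and $P$ avoids that boundary, the corollary provides an open subset $V \subset \Omega \subset U$ such that every trajectory of $P(x_{0},V)$ accumulates on a hyperbolic closed geodesic. Because the accumulating trajectories never meet $L$, the limit cycle they accumulate on does not meet $L$ either, hence it is a genuine hyperbolic closed geodesic of the \emph{closed} surface $X$. Finally, the direction of this geodesic must lie in $U$: by the orientation convention (hyperbolic geodesics are oriented so that their holonomy contracts) together with Lemma~\ref{lem:hypcyl}, a trajectory traveling in a direction $d \in V$ that accumulates forward onto a limit cycle forces that cycle's contracting direction to be $d$ itself, so its direction belongs to $V \subset U$. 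As $U$ was an arbitrary nonempty open set, this proves density.

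The only genuine subtlety is the direction bookkeeping in the last step, namely checking that the geodesic produced lands in $U$ rather than merely in $U \cup -U$. This is settled by the orientation/attraction argument just indicated; should one prefer to remain agnostic about the sign, it is equally safe to run the whole construction with $-U$ in place of $U$ (also open and nonempty) and to invoke the same attraction principle, since in either case the produced cylinder is oriented by its contracting holonomy. Everything else is a direct concatenation of Lemma~\ref{lem:horipencil} and Corollary~\ref{cor:hyperExist}, together with the one-line verification that cutting along $L$ preserves both the hypothesis ``nonempty boundary'' and the conclusion ``closed geodesic of $X$''.
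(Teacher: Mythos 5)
Your proposal is correct and follows exactly the paper's own route: apply Lemma~\ref{lem:horipencil} to the arbitrary open set $U$ to obtain a boundary-avoiding pencil in $X \setminus L$, then apply Corollary~\ref{cor:hyperExist} to extract a hyperbolic closed geodesic. The only addition is your explicit bookkeeping that the resulting direction lies in $U$ rather than $U \cup -U$, a point the paper leaves implicit; your resolution via the contracting-orientation convention is sound.
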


\begin{proof}
Lemma \ref{lem:horipencil} implies that in any open set of directions, we can find a pencil of trajectories in $X \setminus L$ such that none of these trajectories cross the boundary. Corollary~\ref{cor:hyperExist} then implies that some trajectories of this pencil accumulate on hyperbolic closed geodesics. Therefore, there are directions of hyperbolic closed geodesics in any open subset of $\mathbb{S}^{1}$.\newline
\end{proof}

\section{Morse-Smale dynamics}\label{sec:MSD}

Using results on pencils that we highlighted in Section~\ref{sec:EPD}, we prove that the existence of hyperbolic closed geodesics implies that the directional flow satisfies generically the Morse-Smale property.

\begin{prop}\label{prop:MS}
In any closed dilation surface $X$, if there is a hyperbolic cylinder $C$ such that the directions of its closed geodesics cover an interval $I \subset \mathbb{S}^{1}$, then there is an open dense symmetric subset of $I \cup -I$ such that any trajectory of $X$ in a direction belonging to $J$ either accumulates on a hyperbolic closed geodesics or hits a singularity.
\end{prop}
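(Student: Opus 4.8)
The plan is to reduce everything to the behaviour of separatrices, using the cylinder $C$ in two ways: as a trap (through Lemma~\ref{lem:hypcyl}) and, once removed, as a boundary on which to deploy the no-exceptional-pencil result. Define $J \subset I \cup -I$ to be the set of directions $d$ such that \emph{every} separatrix of $X$ in direction $d$ or $-d$ accumulates on a hyperbolic closed geodesic. By construction $J$ is symmetric. Since there are finitely many singularities, each of finite angle, there are finitely many separatrix prongs; writing $G_A$ for the set of directions along which the separatrix issued from a prong $A$ accumulates on a hyperbolic closed geodesic, one has $J = \bigcap_A G_A$, a finite intersection.

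First I would show that every $d \in J$ is Morse--Smale. Given $d \in J$, each of the finitely many separatrices in direction $d$ or $-d$ enters, by Lemma~\ref{lem:hypcyl}, some hyperbolic cylinder whose closed geodesics point in direction $d$ or $-d$; let $C_1,\dots,C_m$ be these finitely many cylinders and set $Z = X \setminus (C_1 \cup \dots \cup C_m)$, a dilation surface with boundary (marking points on any boundary component that would otherwise carry no singularity). All singularities of $Z$ are singularities of $X$, so every separatrix of $Z$ is a separatrix of $X$; as $d \in J$ it accumulates, hence enters some $C_i$, i.e.\ crosses $\partial Z$, and it crosses at a regular point because no separatrix hits a singularity when $d \in J$ (there are no saddle connections in direction $d$ or $-d$). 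Proposition~\ref{prop:separatrix} applied to $Z$ then yields that every trajectory of $Z$ in direction $d$ or $-d$ crosses $\partial Z$ or hits a singularity. Consequently any trajectory of $X$ in direction $d$ that ever enters some $C_i$ accumulates by Lemma~\ref{lem:hypcyl}, while any trajectory staying in $Z$ must cross $\partial Z$ (impossible) or hit a singularity. In all cases it accumulates on a hyperbolic closed geodesic or hits a singularity, which is the Morse--Smale property.

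Next I would prove that $J$ is open and dense. Openness of each $G_A$ is a stability statement: if the separatrix from $A$ in direction $d_0$ enters a cylinder $C_i$ transversally at a regular boundary point after finite time, then for $d$ near $d_0$ the separatrix varies continuously up to that time, still crosses into $C_i$, and $C_i$ still carries closed geodesics in direction $d$ or $-d$ since it covers an open sector of directions; Lemma~\ref{lem:hypcyl} gives accumulation, so $d \in G_A$. Hence $J = \bigcap_A G_A$ is open. For density it suffices to show each $G_A$ is dense, as the finite intersection of open dense sets is then open dense. Suppose some $G_A$ misses an open interval $W' \subset I \cup -I$. For every $d \in W'$ the separatrix $S_A(d)$ accumulates on no hyperbolic closed geodesic; in particular it cannot enter $C$, for entering $C$ would force accumulation by Lemma~\ref{lem:hypcyl} (the closed geodesics of $C$ point in directions of $I$ and $d \in I \cup -I$), so $S_A(d)$ never crosses $\partial C$. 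Thus the pencil $P(A, W')$ lives in the surface-with-boundary $X \setminus C$, and none of its trajectories crosses that boundary or accumulates on a hyperbolic closed geodesic: it is an exceptional pencil, contradicting Proposition~\ref{prop:noEP}. Crucially, the definition of exceptional pencil permits trajectories hitting singularities, so it is irrelevant that $S_A(d)$ may be a saddle connection for some $d \in W'$. Therefore each $G_A$ is dense and $J$ is the required open dense symmetric set.

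The main obstacle is the Morse--Smale deduction of the second paragraph. The delicate points are verifying that removing the cylinders $C_i$ yields a genuine dilation surface with boundary whose \emph{every} separatrix crosses that boundary at a regular point, so that the hypothesis of Proposition~\ref{prop:separatrix} really holds, and confirming that a trajectory once trapped in some $C_i$ cannot escape it. By contrast the density step is the cleanest, once one notices that the permissive notion of exceptional pencil lets saddle-connection directions coexist with genuinely recurrent ones inside the same bad interval, so that a single application of Proposition~\ref{prop:noEP} on $X \setminus C$ closes the argument.
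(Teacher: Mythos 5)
Your proof is correct, and while it rests on the same three pillars as the paper's argument --- Lemma~\ref{lem:hypcyl} as a trap, Proposition~\ref{prop:separatrix} to upgrade from separatrices to all trajectories, and the non-existence of exceptional pencils --- the way you obtain density is genuinely different and noticeably more economical. The paper never defines the good set $J$ abstractly: it builds it constructively in two rounds. It first takes a portion $C'$ of $C$ over a small interval $U\subset I$, observes that trajectories in directions of $-U$ can only exit (never enter) $C'$, and applies Corollary~\ref{cor:hyperExist} to the separatrix pencils of $X\setminus C'$ to produce a dense set $M\subset -U$; then, for each $d\in M$, it removes the cylinders $C(d)$ carrying the geodesics in direction $d$ and runs Corollary~\ref{cor:hyperExist} a second time on pencils in directions $-N(d)\subset U$, finally assembling $J=(S\cap -T)\cup(T\cap -S)$. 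You instead define $J$ directly as the set of directions in which every separatrix accumulates, pay a small extra cost to check openness (a routine stability argument that the paper gets for free from its construction), and obtain density in one stroke: if some pencil $P(A,W')$ with $W'\subset I\cup -I$ had no accumulating trajectory, then by Lemma~\ref{lem:hypcyl} none of its trajectories could ever enter $C$ (for every $d\in I\cup -I$ the cylinder $C$ carries a closed geodesic in direction $d$ or $-d$), so the pencil would be exceptional in $X\setminus C$, contradicting Proposition~\ref{prop:noEP}. This inversion --- deducing ``does not cross the boundary'' from ``does not accumulate'' rather than the other way around --- lets the single full cylinder $C$ serve as a trap for all directions of $I\cup -I$ simultaneously and eliminates the paper's delicate $U$ versus $-U$ bookkeeping. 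Your concluding Morse--Smale step (remove the cylinders reached by the separatrices, apply Proposition~\ref{prop:separatrix}, finish with Lemma~\ref{lem:hypcyl}) coincides with the paper's last two paragraphs, with the minor simplification that you remove whole maximal cylinders, whose boundaries are already unions of saddle connections, so the marked points you mention are in fact never needed.
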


\begin{proof}
We consider any open interval $U \subset I$ of angle strictly smaller than $\pi$. Let $C'$ be a portion of cylinder $C$ covering all directions in $U$. Adding marked points on the two boundary loops, we therefore endow $X \setminus C'$ with a structure of dilation surface with boundary. For any $x \in X \setminus C'$, no trajectory in pencils $P(x,-I)$ enters $C'$. There are finitely many such pencils of separatrices (trajectories starting from singularities). For each of them, Corollary \ref{cor:hyperExist} implies that there is an open dense subset of $-U$ where trajectories accumulate on some hyperbolic closed geodesic. Their intersection $M$ is an open dense subset of $-U$ such that every separatrix of $X \setminus C'$ in these directions accumulates on a (nonsingular) hyperbolic closed geodesic.\newline
Let $d$ be a direction in $M$. Cylinders containing hyperbolic closed geodesics in direction
$d$ are disjoint. We denote by $C(d)$ the union of the portions of these cylinders covering directions of $-U$. We also denote by $N(d)$ the intersection of the different intervals of directions of the (oriented) closed geodesics of these portions. $N(d)$ is an open interval containing $d$.\newline
Set $X \setminus C(d)$ can be endowed with a structure of dilation surface with boundary. Separatrices of $X \setminus C(d)$ in directions of $-N(d)$ cannot enter any cylinder of $C(d)$. Therefore, we can use Corollary~\ref{cor:hyperExist} once again to prove that there is an open dense subset $R(d)$ of $-N(d)$ such that every separatrix of $X \setminus C(d)$ in a direction of $R(d)$ accumulates on a (nonsingular) hyperbolic closed geodesic.\newline
This way, we prove that there is an open dense subset $S$ of $I$ in the directions of which every separatrix accumulates on a hyperbolic closed geodesic. Similarly, there is an open dense subset of $-I$ for which the same property holds. Since $S \cap -T$ is an open dense subset of $I$ and $T \cap -S$ is an open dense subset of $-I$, it follows that $J = (S \cap -T) \cup (T \cup -S)$ is a symmetric open dense subset of $I \cup -I$ in the directions of which every separatrix accumulates on a hyperbolic closed geodesic.\newline
For any direction $r \in J$, closed geodesics in directions $r$ and $-r$ belong to a disjoint union $C(r;-r)$ of portions of cylinders. Since there are no saddle connection in these directions, cylinders of $C(r;-r)$ automatically are hyperbolic.
By removing these portions of cylinders from $X$ and adding the adequate marked points, we create a dilation surface with boundary where every separatrix eventually crosses the boundary (and then accumulates on a hyperbolic closed
geodesic inside, see Lemma~\ref{lem:hypcyl}).\newline
Then Proposition~\ref{prop:separatrix} implies that in $X \setminus C(r;-r)$, any trajectory traveling in directions $r$ or $-r$ either hits a singularity or crosses the boundary. Consequently, every trajectory in $X$ pointing in these directions either hits a singularity or accumulates on some hyperbolic closed geodesic.
\end{proof}

We recall that a flow is Morse-Smale if every trajectory is either critical (hits a singularity) or accumulates on some attracting limit cycle.

\begin{proof}[Proof of Theorem \ref{thm:equi}]
If a dilation surface satisfies Conjecture~\ref{conj:two}, then the directions of its hyperbolic closed geodesics are dense in $\mathbb{S}^{1}$. Each of these geodesics belongs to a hyperbolic cylinder for which directions of regular closed geodesics cover an open subset of $\mathbb{S}^{1}$. For each of these open sets, Proposition~\ref{prop:MS} provides an open dense subset on which the dynamics of the directional flow is Morse-Smale. The union of these open sets forms an open dense subset of $\mathbb{S}^{1}$.
\end{proof}

\begin{proof}[Proof of Theorem \ref{thm:horizon}]
Corollary~\ref{cor:horconj2} proves that every closed dilation surface with a horizon saddle connection satisfies Conjecture~\ref{conj:two}. Theorem~\ref{thm:equi} proves that Conjecture~\ref{conj:two} implies Conjecture~\ref{conj:three}.\newline
\end{proof}

\paragraph{\bf Acknowledgements.} The author thanks Adrien Boulanger, Selim Ghazouani, Ben-Michael Kohli and the anonymous referees for valuable remarks and inspiration.\newline

\nopagebreak
\vskip.5cm
\end{document}